\newtheorem{fed}{\textbf{Definition}}[section]
\newtheorem{thm}[fed]{\textbf{Theorem}}
\newtheorem{lemma}[fed]{\textbf{Lemma}}
\newtheorem{prop}[fed]{\textbf{Proposition}}
\newtheorem{conj}[fed]{\textbf{Conjecture}}
\newtheorem{cor}[fed]{\textbf{Corollary}}
\newcommand{\HH}{\mathcal{H}}
\newcommand{\VV}{\mathcal{V}}
\begin{document}
\title{Bubbles and Onis}
\author{Peter Albers, Urs Frauenfelder}

\address{Peter Albers\\
 Mathematisches Institut\\
 Westf\"alische Wilhelms-Universit\"at M\"unster}
\email{peter.albers@wwu.de}

\address{Urs Frauenfelder\\
 Mathematisches Institut\\
Universit\"at Augsburg}
\email{urs.frauenfelder@math.uni-augsburg.de}

\begin{abstract}
In this article we study the gradient flow equation of a variant of the Rabinowitz action functional on very negative line bundles and relate it to periodic orbits on the base of this bundle. On very negative line bundles there are generically no holomorphic spheres. 
\end{abstract}

\maketitle

\setlength{\epigraphwidth}{5.5cm}

\epigraph{\flushright{\it Dedicated to Paul Rabinowitz}}

\section{Introduction}

Rabinowitz pioneered the application of global methods in Hamiltonian dynamics, see \cite{Rabinowitz_Periodic_solutions_of_Hamiltonian_systems,Rabinowitz_Periodic_solutions_of_Hamiltonian_systems_prescribed}. This motivated Weinstein to formulate his famous conjecture in \cite{Weinstein_The_conjecture} on the existence of periodic solutions of Hamiltonian systems with  fixed energy and arbitrary period. For existence of periodic solutions of fixed period of time-dependent Hamiltonian systems Arnold formulated in the 60s his famous conjecture, see \cite[Chapter 11]{McDuff_Salamon_introduction_symplectic_topology} for a detailed account. Arnold's conjecture motivated Floer to introduce his semi-infinite dimensional Morse homology, nowadays referred to as Floer homology, more precisely as Hamiltonian Floer homology in this context. The analog of Floer homology for the fixed energy problem was introduced by Cieliebak-Frauenfelder in \cite{cieliebak-frauenfelder} and is referred to as \textbf{Rabinowitz Floer homology} in reference to the action functional used in Rabinowitz' fundamental article \cite{Rabinowitz_Periodic_solutions_of_Hamiltonian_systems}. In the present article we study a variant of the Rabinowitz action functional for the fixed period problem. 

Defining Hamiltonian Floer homology on general closed symplectic manifolds $(M,\omega)$ is notoriously difficult due to bubbling-off of holomorphic spheres. Formally bubbling-off of holomorphic spheres is a codimension two phenomenon, however this leads to sever transversality issues which forces the use of multi-valued perturbations and thus rational coefficients, see for example \cite{fukaya-ono, Hofer_polyfolds_and_a_general_Fredholm_theory}. On a sufficiently negative line bundle over the closed symplectic manifold holomorphic spheres generically do not exist. In this article we analyze what will happen to the bubbles. 

We lift the Hamiltonian dynamics from $M$ to the negative line bundle via a modified Rabinowitz action functional. If $(M,\omega)$ is semi-positive one gets a version of Rabinowitz Floer homology on the negative line bundle which is canonically isomorphic to the Floer homology on $M$. In this paper we don't want to assume semi-positivity.

We describe here a moduli space of a new PDE-type problem. We refer to solutions to this problem as onis.\footnote{Oni (\includegraphics[trim = 0 300 0 0, width = 3ex]{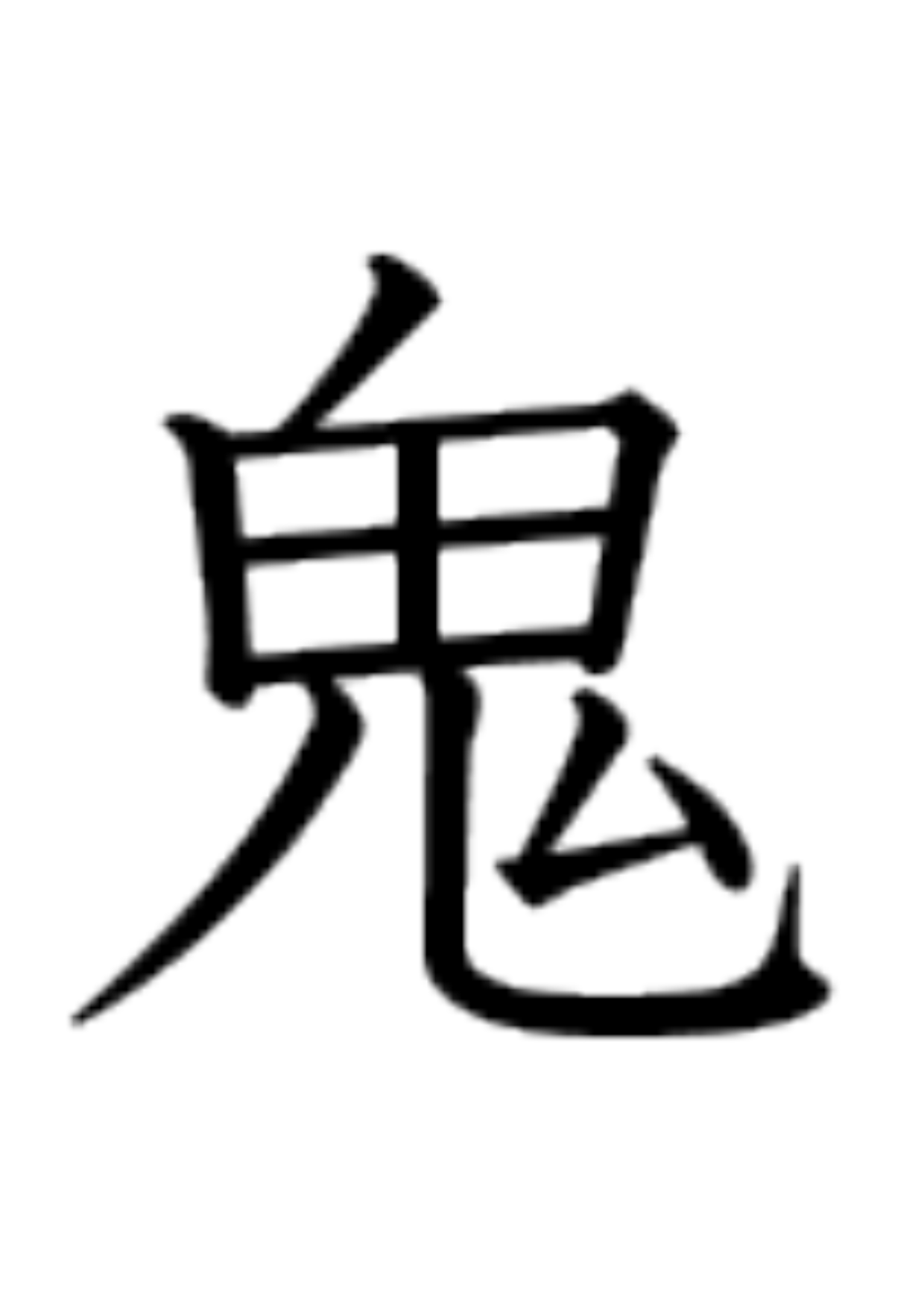}) is Japanese for an imp.} As with holomorphic spheres onis are the obstructions to a well-defined boundary operator in Rabinowitz Floer homology. The advantage of the onis is that for generic almost complex structure one has transversality. In the second part of the paper we explain how this observation is potentially useful. To tame onis we marry them, i.e.~with $\mathbb{Z}/2$-coefficients the oni-obstruction vanishes and one obtains a well-defined boundary operator. So far, it seems that marrying onis is only possible if the Hamiltonian diffeomorphism $\phi$ we are studying admits a square root, i.e.~$\phi=\psi^2$ for some other Hamiltonian $\psi$. Unfortunately, this is not always the case as we showed in \cite{albers-frauenfelder}. Marrying onis apparently comes at a price: we explain that marrying onis in the worst case costs 80\% of the sum of the Betti numbers of $M$. The undaunted reader is cordially invited to read this gloomy story.

\section{From bubbles to Onis}

In Floer homology one considers a closed symplectic manifold
$(M,\omega)$ and a Hamiltonian $H \in C^\infty(M \times S^1)$ which
is periodic in time. For $S^1=\mathbb{R}/\mathbb{Z}$ the circle we
denote by
$$\mathscr{L}_M \subset C^\infty(S^1,M)$$
the component of the free loop space $C^\infty(S^1,M)$ consisting of
contractible loops. We consider the cover
$$\widetilde{\mathscr{L}}_M \to \mathscr{L}_M$$
consisting of equivalence classes $[v,\bar{v}]$ where $v \in
\mathscr{L}_M$ and $\bar{v}$ is a filling disk of $v$. Here two
pairs are equivalent if the loops coincide and the integrals of
$\omega$ as well as of one and hence every representative of the
first Chern class $c_1(TM)$ agree on the filling disk. Therefore the
group of decktransformations of the cover $\widetilde{\mathscr{L}}_M
\to \mathscr{L}_M$ is the group
$$\Gamma=\frac{\pi_2(M)}{\mathrm{ker}\omega \cap
\mathrm{\ker}(c_1(TM))}.$$ The action functional of classical
mechanics
$$\mathcal{A}_H \colon \widetilde{\mathscr{L}}_M \to \mathbb{R}$$
is given by
$$\mathcal{A}_H\big([v,\bar{v}]\big)=-\int \bar{v}^*\omega-
\int_0^1 H(v(t),t)dt.$$ If $[v,\bar{v}]$ is a critical point of
$\mathcal{A}_H$, then $v$ is a contractible time-one periodic orbit
of the Hamiltonian vector field of $H$, i.e. a solution of the ODE
$$\partial_t v(t)=X_{H_t}(v(t)), \quad t \in S^1,$$
where the Hamiltonian vector field is defined by
$dH_t=\omega(X_{H_t},\cdot)$, with $H_t=H(\cdot,t) \in C^\infty(M)$
for $t \in S^1$. The Hamiltonian $H$ is called nondegenerate if for
each contractible periodic orbit $v$ the time-one map $\phi_H$ of
the Hamiltonian vector field of $H$ satisfies
$$\mathrm{det}\big(d\phi(v(0))-\mathrm{id}|_{T_{v(0)}M}\big) \neq 0.$$
If the Hamiltonian is nondegenerate there are only finitely many
$\Gamma$-orbits of critical points of $\mathcal{A}_H$ and the Floer
chain space $CF_*(H)$ can be defined as the $\mathbb{Z}_2$-vector
space consisting of infinite sums
$$\xi=\sum_{c \in \mathrm{crit}(\mathcal{A}_H)} \xi_c c$$
with coefficients $\xi_c \in \mathbb{Z}_2$ which satisfy for any $r
\in \mathbb{R}$ the finiteness condition
$$\#\{c \in \mathrm{crit}(\mathcal{A}_H): \xi_c \neq 0,\,\,
\mathcal{A}_H(c)>r\}<\infty.$$ The grading on $CF_*(H)$ is given by
the Conley-Zehnder index. Using the action of $\Gamma$ on
$\mathrm{crit}(\mathcal{A}_H)$ the Floer chain space can be endowed
with the structure of a module over the Novikov ring of $\Gamma$. As a matter of fact the Novikov ring over a field is itself a field.

To define a boundary operator on $CF_*(H)$ Floer considers the
$L^2$-gradient flow equation for the action functional of classical
mechanics. We denote by $\mathcal{J}$ circle families $J_t$ for $t
\in S^1$ of $\omega$-compatible almost complex structures on $M$.
For $J \in \mathcal{J}$ the metric $\mathfrak{m}_J$ on
$\widetilde{\mathscr{L}}_M$ at a point $[v,\bar{v}]$ is given for
two tangent vectors $\hat{v}_1,\hat{v}_2 \in T_{[v,\bar{v}]}
\widetilde{\mathscr{L}}_M=\Gamma(S^1,v^* TM)$ by
$$\mathfrak{m}_J(\hat{v}_1,\hat{v}_2)=\int_0^1 \omega\big(\hat{v}_1(t)
,J_t(v(t))\hat{v}_2(t)\big)dt.$$ The gradient $\nabla_J
\mathcal{A}_H$ of the action functional $\mathcal{A}_H$ with respect
to the metric $\mathfrak{m}_J$ at a point $[v,\bar{v}] \in
\widetilde{\mathscr{L}}_M$ is given by
$$\nabla_J \mathcal{A}_H([v,\bar{v}])=J(v)(\partial_t v-X_H(v)).$$
A gradient flow line $w=[v,\bar{v}] \in
C^\infty(\mathbb{R},\widetilde{\mathscr{L}}_M)$ is formally a
solution of the ODE
$$\partial_s w(s)+\nabla_J \mathcal{A}_H(w(s))=0, \quad s \in
\mathbb{R}$$ and therefore $v \in C^\infty(\mathbb{R} \times S^1,M)$
is a solution of the PDE
$$\partial_s v+J(v)(\partial_t v-X_H(v))=0$$
which is a perturbed holomorphic curve equation. For critical points
$c_-,c_+ \in \mathrm{crit}(\mathcal{A}_H)$ we denote by
$\mathcal{M}(c_-,c_+;J)$ the moduli space of unparametrised gradient
flow lines $[w]$ of $\mathcal{A}_H$ which asymptotically satisfy
$\lim_{s \to \pm \infty}w(s)=c_\pm$. By
$$\mathcal{J}_{\textrm{reg}}(H) \subset \mathcal{J}$$
we denote the subset of second category of $J \in \mathcal{J}$ such
that the linearization of Floer's gradient flow equation with
respect to $J_t$ along any finite energy gradient flow line is
surjective. If $J \in \mathcal{J}_{\mathrm{reg}}$ then the moduli
space of gradient flow lines is a smooth manifold of dimension
$$\mathrm{dim}\mathcal{M}(c_-,c_+;J)=\mu_{CZ}(c_-)-\mu_{CZ}(c_+)-1.$$
In particular if $\mu_{CZ}(c_-)=\mu_{CZ}(c_+)+1$ then the moduli
space is zero dimensional. Unfortunately without additional
assumptions on $(M,\omega)$ like monotonicity or more generally
semipositivity there is little hope that this moduli space is also
compact. This is due to bubbling of holomorphic spheres. If it is
compact then it is a finite set of points and one sets
$$n(c_-,c_+,J)=\#_2\mathcal{M}(c_-,c_+;J)$$
where $\#_2$ denotes cardinality modulo two. In this case the Floer
boundary map
$$\partial \colon CF_*(H) \to CF_{*-1}(H)$$
is defined for $\xi=\sum_{c \in \mathrm{crit}(\mathcal{A}_H)}\xi_c c
\in CF_*(H)$ by
$$\partial(\xi)=\sum_{c' \in \mathrm{crit}(\mathcal{A}_H)}
\sum_{c \in \mathrm{crit}(\mathcal{A}_H)} \xi_c n(c,c';J) c'.$$ If
the moduli spaces are not compact then one way to still define the
Floer boundary operator is by using abstract perturbation theory. In
this case one first compactifies the moduli space via bubbles and
then abstractly perturbs it. Because of the possibility that the
bubbles have nontrivial automorphism group this perturbation is
multivalued and one has to restrict in this case to rational
coefficients.
\\ \\
In this paper we study a different kind of perturbation of the
moduli spaces of Floer's gradient flow equation which also leads to
compact moduli spaces. We have to assume in addition that
$(M,\omega)$ satisfies the Bohr-Sommerfeld condition, i.e.
$[\omega]$ lies in the image of $H^2(M;\mathbb{Z})\to H^2_{dR}(M)$.
Under this assumption there exists a hermitian line bundle
$$E_\omega \to M$$
whose first Chern class satisfies
$$c_1(E_\omega)=-[\omega].$$
For $\nu \in \mathbb{N}$ we consider the hermitian line bundle
$$E=E^\nu_\omega=E_\omega^{\otimes \nu}$$
whose first Chern class satisfies
$$c_1(E)=-\nu[\omega].$$
We fix a hermitian connection $\alpha$ on $E$ whose curvature
satisfies
$$F_\alpha=\nu \omega.$$
If $p \colon E \to M$ denotes the canonical projection, we endow $E$
with the symplectic form
$$\omega_E=d(\pi |u|^2 \alpha)+p^* \omega.$$
Thinking of $M$ as the zero section in $E$ the restriction of
$\omega_E$ to $M$ coincides with $\omega$. However, the virtual
dimension of the moduli space of holomorphic curves drops in $E$,
because the line bundle is negative. If the line bundle is negative
enough generically there are no holomorphic curves left in $E$. To
have a quantitative statement to say what "negative enough" means we
consider the Auroux constant. If $\beta \in \Omega^2(M)$ and $J$ is
an $\omega$-compatible almost complex structure we set
$$\gamma_{\beta,J}=\beta(\cdot,J\cdot) \in \Gamma(T^*M \oplus
T^*M)$$ and abbreviate
$$\kappa_\beta(J)=||\gamma_{\beta,J}||_J$$
where the notation $||\cdot||_J$ means that we take the norm with
respect to the metric $\omega(\cdot,J\cdot)$. We then set
$$\kappa(J)=\inf\big\{\kappa_\beta(J):
d\beta=0,\,\,[\beta]=c_1(TM)\big\}$$ and finally
$$\kappa(\omega)=\inf\big\{\kappa(J):\,J\,\,\,\omega\textrm{-compatible}\big\}.$$
We assume in the following that $\nu \in \mathbb{N}$ satisfies
$$\nu >\max\big\{n+\kappa(\omega)-2,\kappa(\omega)\big\}.$$
This condition turns out to be sufficient to make sure that there
are $\omega$-compatible almost complex structures for which no
holomorphic spheres exist.

We denote by $\mu \in C^\infty(E)$ the function
$$\mu(u)=\pi(|u|^2-1), \quad u \in E.$$
Denote by $\mathscr{L}_E$ the component of contractible loops in the
free loop space $C^\infty(S^1,E)$ and by $\widetilde{\mathscr{L}}_E$
the cover of $\mathscr{L}_E$ consisting of equivalence classes of
pairs $[u,\bar{u}]$ where $u \in \mathscr{L}_E$, $\bar{u} \in
C^\infty(D,E)$ with $D$ the unit disk is a filling disk of $u$, and
two filling disks are equivalent if $\omega_E$ and each
representative of $p^*c_1(TM)$ agrees on them. For $H \in
C^\infty(M\times S^1)$ the time-dependent Hamiltonian on the base
$M$, we denote for $t \in S^1$
$$\widehat{H}_t(u)=\nu\pi|u|^2 H_t(p(u)), \quad u \in E$$
the fiberwise quadratic lift of $H$ to $E$. We consider the
following variant of Rabinowitz action functional
$$\mathcal{A}^\mu_H \colon \widetilde{\mathscr{L}}_E
\times \mathbb{R} \to \mathbb{R}$$ given for
$\big([u,\bar{u}],\eta\big) \in \widetilde{\mathscr{L}}_E \times
\mathbb{R}$ by
$$\mathcal{A}^\mu_H\big([u,\bar{u}],\eta\big)
=-\int_D \bar{u}^*\omega_E-\int_0^1\widehat{H}_t(u)dt -\eta\int_0^1
\mu(u)dt.$$ If $P \colon \widetilde{\mathscr{L}}_E \times \mathbb{R}
\to \widetilde{\mathscr{L}}_M$ denotes the projection induced from
the projection $p \colon E \to M$, then
$$P\big(\mathrm{crit}(\mathcal{A}_H^\mu)\big)=
\mathrm{crit}(\mathcal{A}_H).$$ However, the correspondence is not
one to one, but for each $c \in \mathrm{crit}(\mathcal{A}_H)$ there
is a whole $\mathbb{Z}\times S^1$- family of critical points of
$\mathrm{crit}(\mathcal{A}_H^\mu)$, i.e.
$$P|_{\mathrm{crit}(\mathcal{A}_H^\mu)} ^{-1}(c) \cong \mathbb{Z}
\times S^1, \quad c \in \mathrm{crit}(\mathcal{A}_H).$$ The
geometric origin of this fact is the following. The circle acts on
$E$ by
$$r*u=e^{-2\pi ir}u, \quad r \in S^1=\mathbb{R}/\mathbb{Z},\,\,
u \in E.$$ In fact this action is Hamiltonian with respect to
$\omega_E$ with moment map $\mu$. This action gives rise to a
$\mathbb{Z}\times S^1$-action on $\mathscr{L}_E$ which is given for
$u \in \mathscr{L}_E$ by
$$\big((n,r)_*u\big)(t)=(nt+r)_*u(t), \,\,t \in S^1,\,\,(n,r)
\in \mathbb{Z}\times S^1.$$ We lift this action to the cover
$\widetilde{\mathscr{L}}_E \to \mathscr{L}_E$ and extend it
trivially to $\widetilde{\mathscr{L}}_E \times \mathbb{R}$. The
differential $d\mathcal{A}_H^\mu$ is invariant under this action and
hence we get a $\mathbb{Z} \times S^1$-action on
$\mathrm{crit}(\mathcal{A}^\mu_H)$. The projection $P$ induces a
bijection
$$\frac{\mathrm{crit}(\mathcal{A}_H^\mu)}{\mathbb{Z}\times S^1}
\cong \mathrm{crit}(\mathcal{A}_H).$$ A section for the
$\mathbb{Z}$-action on $\mathrm{crit}(\mathcal{A}^\mu_H)$ is given
by the \emph{winding number}
$$\mathfrak{w} \colon \mathrm{crit}(\mathcal{A}^\mu_H) \to
\mathbb{Z}$$ which for a critical point $([u,\bar{u}],\eta)$ is
given by
$$\mathfrak{w}\big([u,\bar{u}],\eta\big)=\int u^*\alpha-
\nu \int \bar{u}^*p^*\omega.$$
 The connection $\alpha$ induces a
splitting
\begin{equation}\label{split}
TE=\VV \oplus \HH
\end{equation}
into vertical and horizontal subbundles. Recalling that $p \colon E
\to M$ denotes the canonical projection, then for each $e \in E$ we
have canonical identifications
$$\HH_e=T_{p(e)}M, \quad \VV_e=E_{p(e)}.$$
We denote by $I$ the complex structure in $\VV$ coming from the
complex structure of the hermitian vector bundle $E \to M$. For $J
\in \mathcal{J}$ we denote by abuse of notation its lift to a family
of complex structure on $\HH$ also by $J$. We extend the vector bundle
$\mathrm{End}(\HH,\VV)$ over $E$ trivially to $E \times S^1$. For a
section $B \in \Gamma_0\big(E \times S^1,\mathrm{End}(\HH,\VV)\big)$ we
abbreviate
$$B_t=B( \cdot, t) \in \Gamma_0\big(E,\mathrm{End}(\HH,\VV)\big), \quad
t \in S^1.$$ 
Since $H$ is non-degenerate $X_H$ has only finitely many periodic orbits. From now on we fix around each periodic orbit a neighborhood which contracts onto the orbit. Furthermore, we assume that all these neighborhoods are disjoint. Let $\mathcal{U}$ be the union of these neighborhoods. If $\Gamma_0$ stands for sections with compact support
we introduce the vector space
\begin{equation}\label{eqn:space_of_Bs}\mathfrak{B}(J)=\Big\{B \in
\Gamma_0\big(E \times S^1,\mathrm{End}(\HH,\VV)\big): B_tJ_t=-IB_t,\,\,t
\in S^1\text{ and }B_t(e)=0\;\forall e\text{ with }p(e)\in\mathcal{U}\Big\}.
\end{equation}
Even though $\mathfrak{B}(J)$ depends on $\mathcal{U}$ we will suppress this in the notation.

With respect to the splitting (\ref{split}) we
introduce for $B \in \mathfrak{B}(J)$ the following circle family of
almost complex structures on $E$
$$J_t^B=\left(\begin{array}{cc}
I & B_t \\
0 & J_t
\end{array}\right).$$
If $B$ is different from zero then $J^B$ is not
$\omega_E$-compatible, however if $B$ is a small enough perturbation
then $J^B$ is still $\omega_E$-tame. We therefore introduce the
nonempty open convex subset
$$\mathfrak{B}^T(J) \subset \mathfrak{B}(J)$$
consisting of all $B \in \mathfrak{B}(J)$ such that $J_t^B$ is
$\omega_E$-tame for any $t \in S^1$. If $J_t$ is a smooth family of
$\omega$-compatible almost complex structures and $B_t$ is a smooth
family of compactly supported sections from $E$ to
$\mathrm{End}(\HH,\VV)$ such that $B_t \in \mathfrak{B}(J_t)$ for every
$t \in S^1$ we denote by $\mathfrak{m}_B$ the bilinear form on
$T\big(\widetilde{\mathscr{L}}_E \times \mathbb{R}\big)$ which is
given for $\big([u,\bar{u}],\eta\big) \in \widetilde{\mathscr{L}}_E
\times \mathbb{R}$ and $(\hat{u}_1,\hat{\eta}_1),
(\hat{u}_2,\hat{\eta}_2) \in T_{([u,\bar{u}],\eta)}\big(
\widetilde{\mathscr{L}}_E \times \mathbb{R}\big) =\Gamma(S^1,u^*TE)
\times \mathbb{R}$ by the formula
$$\mathfrak{m}_B\big((\hat{u}_1,\hat{\eta}_1),
(\hat{u}_2,\hat{\eta}_2)\big)= -\int_0^1\omega_E\big(
J^{B_t}_t(u(t))\hat{u}_1(t),\hat{u}_2(t)\big)dt+ \hat{\eta}_1 \cdot
\hat{\eta}_2.$$ If $B$ is different from zero, the bilinear form
$\mathfrak{m}_B$ is not symmetric. However, it is nondegenerate, and
if $B$ is small it is positive.
 Denote by
$$R=X_{\mu} \in \Gamma(TE)$$
the infinitesimal generator of the $S^1$-action on $E$. The gradient
of Rabinowitz action functional with respect to the bilinear form
$\mathfrak{m}_B$ at $w=\big([u,\bar{u}],\eta\big) \in
\widetilde{\mathscr{L}}_E \times \mathbb{R}$ defined implicitly by
the condition
$$d\mathcal{A}^\mu_H(w)=\mathfrak{m}_B\big(\nabla_B
\mathcal{A}^\mu_H(w),\hat{w}\big), \quad \forall\,\, \hat{w} \in
T_w\big(\widetilde{\mathcal{L}}_E \times \mathbb{R}\big)$$ is given
by
$$\nabla_B\mathcal{A}^\mu_H
\big([u,\bar{u}],\eta\big)
=\left(\begin{array}{c}
J^B\big(\partial_t u-X_{\widehat{H}}(u)-\eta R(u)\big)\\
-\int_0^1\mu(u)dt
\end{array}\right).$$
We point out that $\nabla_B\mathcal{A}^\mu_H$ is not an honest gradient since $\mathfrak{m}_B$ is not symmetric. But
$$
d\mathcal{A}^\mu_H(w)\nabla_B
\mathcal{A}^\mu_H(w)=\mathfrak{m}_B\big(\nabla_B
\mathcal{A}^\mu_H(w),\nabla_B
\mathcal{A}^\mu_H(w)\big)>0
$$
away from the critical points. Moreover $\mathfrak{m}_B$ is a honest metric on loops contained in $p^{-1}(\mathcal{U})$. Thus, the vector field $\nabla_B\mathcal{A}^\mu_H(w)$ is a pseudo-gradient for $\mathcal{A}^\mu_H$.

A gradient flow line $w \in C^\infty(\mathbb{R},
\widetilde{\mathscr{L}}_E \times \mathbb{R})$ of Rabinowitz action
functional with respect to $\mathfrak{m}_B$ formally satisfies
$$\partial_s w(s)+
\nabla_B\mathcal{A}^\mu_H(w(s))=0, \quad s \in \mathbb{R}.$$ Hence
$(u,\eta) \in C^\infty(\mathbb{R}\times S^1,E) \times
C^\infty(\mathbb{R},\mathbb{R})$ is a solution of the problem
\begin{equation}\label{gr1}
\left. \begin{array}{c}
\partial_s u+J^B(u)\big(\partial_t u-X_{\widehat{H}}(u)-\eta
R(u)\big)=0\\
\partial_s \eta-\int_0^1 \mu(u)dt=0.
\end{array}\right\}
\end{equation}
Writing
$$\partial_s u=\partial^v_s u+\partial^h_s u, \quad
\partial_t u=\partial^v_t u+\partial^h_t u$$
with respect to the splitting (\ref{split})
we can rewrite (\ref{gr1}) to
\begin{equation}\label{gr2}
\left.\begin{array}{c}
\partial_s^v u+I\Big(\partial_t^v u-\big(\nu H(p(u))+\eta
\big)R(u)\Big)
+B\Big(\partial_t^h(u)-X_H(p(u))\Big)=0\\
\partial_s^h u+J(p(u))\Big(\partial_t^h(u)-X_H(p(u))\Big)=0\\
\partial_s \eta-\int_0^1 \mu(u)dt=0.
\end{array}\right\}
\end{equation}
We abbreviate by $\mathcal{R}(B)$ the moduli space of all
unparametrised flow lines $[w]$ of $\nabla_B \mathcal{A}^\mu_H$ such
that $\lim_{s \to \pm \infty}w(s) \in
\mathrm{crit}(\mathcal{A}^\mu_H)$ exists. We further introduce the
evaluation maps
$$\mathrm{ev}_\pm \colon \mathcal{R}(B) \to
\mathrm{crit}(\mathcal{A}^\mu_H)$$ defined by
$$\mathrm{ev}_\pm([w])=\lim_{s \to \pm \infty}w(s).$$
We denote by $\mathcal{J}^\nu \subset \mathcal{J}$ the nonempty open
subset of $J \in \mathcal{J}$ such that
$$\nu >\max\big\{n+\kappa(J_t)-2,\kappa(J_t)\big\}, \quad
t \in S^1.$$ and set
$$\mathcal{J}^\nu_{\textrm{reg}}(H)=\mathcal{J}^\nu \cap
\mathcal{J}_{\textrm{reg}}(H).$$ For $J \in \mathcal{J}^\nu$ we
abbreviate by
$$\mathfrak{B}_{\textrm{reg}}(J) \subset \mathfrak{B}(J)$$
the subset of perturbations $B \in \mathfrak{B}(J)$ which satisfy
the following three conditions.
\begin{description}
 \item[(i)] The linearization of the flow equation for
 $\nabla_B \mathcal{A}^\mu_H$ on $\mathcal{R}(B)$ is surjective.
 \item[(ii)] The evaluation maps $\mathrm{ev}_+$ and $\mathrm{ev}_-$
 are transverse to each other.
 \item[(iii)] For each $t \in S^1$ there are no nonconstant
 $J^B_t$-holomorphic spheres on $E$.
\end{description}
\begin{prop}\label{generic}
For $J \in \mathcal{J}^\nu_{\mathrm{reg}}$ the subset
$\mathfrak{B}_{\mathrm{reg}}(J) \subset \mathfrak{B}(J)$ is of
second category.
\end{prop}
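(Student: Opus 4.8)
The proof is the standard transversality package --- a Sard--Smale argument on universal moduli spaces --- applied separately to the three conditions (i), (ii), (iii) and then assembled by Baire's theorem; one works, as usual, with a separable Banach space of $C^\varepsilon$-perturbations sitting inside $\mathfrak{B}(J)$ and passes back to the $C^\infty$ statement by Taubes' argument. The structural fact that organizes everything is that $J^B$ is block upper-triangular for the splitting (\ref{split}), so the projection $p\colon E\to M$ intertwines $J_t^B$ and $J_t$. Two consequences are used repeatedly: the horizontal line of (\ref{gr2}) is exactly Floer's equation on $M$ for $v:=p(u)$ and contains neither $B$, nor $\eta$, nor the vertical part of $u$ --- hence, since $J\in\mathcal{J}_{\mathrm{reg}}(H)$, it is already transverse --- and any $J_t^B$-holomorphic sphere in $E$ projects to a $J_t$-holomorphic sphere in $M$. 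All work takes place inside the non-empty open convex set $\mathfrak{B}^T(J)$, which is a Baire space in its own right; since a comeager subset of a non-empty open subset of a Banach space is of second category in the whole space, it suffices to produce a comeager subset of $\mathfrak{B}^T(J)$ that lies in $\mathfrak{B}_{\mathrm{reg}}(J)$.

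For (i), form for each pair $\mathbf{c}_\pm\in\mathrm{crit}(\mathcal{A}^\mu_H)$ the universal space of pairs $(w,B)$ with $B\in\mathfrak{B}^T(J)$ and $w$ a solution of (\ref{gr1}) asymptotic to $\mathbf{c}_\pm$, in suitable exponentially weighted Sobolev completions adapted to the Morse--Bott asymptotics, and prove that the full linearization is onto. By the first consequence above the horizontal part is already onto, and $B$ enters the remaining operator --- acting on the vertical part of $u$ together with $\eta$ --- only through the term $B(u)\bigl(\partial_t^h u-X_H(v)\bigr)$ of the first line of (\ref{gr2}), which equals $-I\,B(u)\,\partial_s v$ because $v$ solves Floer's equation. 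So let $\zeta^\ast$ be an $L^2$-element of the cokernel --- a solution of the coupled formal adjoint equation --- that is orthogonal to $\{\hat B(u)\,\partial_s v:\hat B\in\mathfrak{B}(J)\}$. A non-constant Floer cylinder $v$ is not contained in $\mathcal{U}$: the components of $\mathcal{U}$ retract onto the orbits, so confinement would force $\mathbf{c}_-=\mathbf{c}_+$ and hence $v$ constant. Thus $\{(s,t):v(s,t)\notin\mathcal{U}\}$ is non-empty and open, and removing the interior-free zero set of $\partial_s v$ leaves a non-empty open set on which $B$ is unconstrained and $\partial_s v\neq0$; there $\hat B(u)$ ranges over all complex-antilinear maps $\HH\to\VV$, which evaluate surjectively on the non-zero vector $\partial_s v$ onto the line $\VV\cong\mathbb{C}$. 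Hence the vertical component of $\zeta^\ast$ vanishes on a non-empty open set, and the unique continuation theorem for the coupled Cauchy--Riemann/ODE adjoint operator (the asymmetry of $\mathfrak{m}_B$ and the $\eta$-coupling being lower order, and $d\mu\neq0$ near the critical points) forces $\zeta^\ast\equiv0$. Flow lines whose projection to $M$ is a trivial cylinder over a periodic orbit --- the only ones contained in $p^{-1}(\mathcal{U})$ --- are treated directly: there $B\equiv0$, $\mathfrak{m}_B$ is an honest metric, and these are genuine negative gradient flow lines of the Morse--Bott functional $\mathcal{A}^\mu_H$, for which transversality is automatic or follows from an explicit computation in the fibre. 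With the universal space a Banach manifold the projection to $\mathfrak{B}^T(J)$ is Fredholm, and Sard--Smale together with Taubes' trick make the $B$ satisfying (i) a comeager subset of $\mathfrak{B}^T(J)$.

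Condition (ii) follows from the same scheme applied to the fibre product $\mathrm{ev}_+\times\mathrm{ev}_-$: set up the universal moduli space of pairs of flow lines with matched asymptotics in $\mathrm{crit}(\mathcal{A}^\mu_H)$ and show that the combined evaluation is a submersion; the only new point is that $\mathrm{crit}(\mathcal{A}^\mu_H)$ carries the $\mathbb{Z}\times S^1$-directions, so one must also move the limits along the $S^1$-factors, and this is again arranged by varying $B$ on the parts of the cylinders lying over $M\setminus\mathcal{U}$ and invoking the same unique continuation. For (iii), the hypothesis $\nu>\max\{n+\kappa(J_t)-2,\kappa(J_t)\}$, combined with the Auroux estimate $c_1(TM)(A)\le\kappa(J_t)\,\omega(A)$ on $J_t$-holomorphic classes, the Bohr--Sommerfeld quantization $\omega(A)\in\mathbb{Z}_{\ge1}$, and $c_1(TE)=p^\ast\bigl(c_1(TM)-\nu\omega\bigr)$, forces the virtual dimension of non-constant $J_t^B$-holomorphic spheres in $E$ to be negative (using, via Stokes, that such a sphere's $\omega_E$-energy equals the $\omega$-area of its projection). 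As this projection is a non-constant $J_t$-holomorphic sphere in $M$, which again cannot lie inside $\mathcal{U}$, varying $B$ over $M\setminus\mathcal{U}$ makes the universal moduli space of simple $J_t^B$-holomorphic spheres in $E$ a Banach manifold whose projection to $\mathfrak{B}^T(J)$ is Fredholm of negative index; hence a comeager set of $B$ lies outside its image, and since multiply covered spheres factor through simple ones, for such $B$ there are no non-constant $J_t^B$-holomorphic spheres. Intersecting the three comeager subsets of $\mathfrak{B}^T(J)$ produced for (i), (ii), (iii) yields a comeager subset of $\mathfrak{B}^T(J)$ lying in $\mathfrak{B}_{\mathrm{reg}}(J)$, which is therefore of second category in $\mathfrak{B}(J)$.

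The genuine work is the transversality-in-$B$ step behind (i) and (ii): one must verify that the doubly constrained perturbation space --- $B_tJ_t=-IB_t$ together with $B_t\equiv0$ over $\mathcal{U}$ --- still surjects onto the cokernel of the linearized operator. This is where one needs simultaneously that non-constant flow lines (and projected spheres) leave $\mathcal{U}$, that complex antilinearity does not obstruct pointwise surjectivity onto the one-dimensional fibre $\VV$, and a unique continuation theorem for the coupled, non-self-adjoint vertical/$\eta$-operator. The separate treatment of flow lines contained in $p^{-1}(\mathcal{U})$ and the routine passage from $C^\varepsilon$ back to $C^\infty$ are comparatively minor.
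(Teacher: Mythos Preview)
Your approach is essentially the one the paper takes: split the linearization of (\ref{gr2}) into a horizontal part, already surjective because $J\in\mathcal{J}^\nu_{\mathrm{reg}}$, and a vertical/$\eta$-part handled by varying $B$; argue that a nonconstant shadow $Pw$ must leave $\mathcal{U}$; treat constant-shadow flow lines (vortices) separately; and kill holomorphic spheres by the Auroux estimate plus negative index. The paper packages your ``pointwise surjectivity of the $B$-variation'' and ``horizontal injectivity of simple $J^B$-spheres'' as two explicit lemmas (Lemma~\ref{transv} and Lemma~\ref{simple}), and replaces your line ``transversality is automatic or follows from an explicit computation in the fibre'' by a genuine computation (Proposition~\ref{transverse} in the appendix on vortices), but the architecture is the same.
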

Recall that $\mathfrak{B}^T(J)$ denotes the set of perturbations $B
\in \mathfrak{B}(J)$ such that $J^B$ is $\omega_E$-tame. We set
$$\mathfrak{B}^T_{\mathrm{reg}}(J)=\mathfrak{B}^T(J)
\cap \mathfrak{B}_{\mathrm{reg}}(J).$$ We make the following
definition. A \emph{lift}
$$\ell \colon \mathrm{crit}(\mathcal{A}_H) \to
\mathrm{crit}(\mathcal{A}^\mu_H)$$ is a section for the projection
$P \colon \mathrm{crit}(\mathcal{A}^\mu_H) \to
\mathrm{crit}(\mathcal{A}_H)$, i.e. $P \circ \ell=
\mathrm{id}|_{\mathrm{crit}(\mathcal{A}_H)}$, satisfying
$$\mathfrak{w}(\ell(c))=0, \quad c \in \mathrm{crit}(\mathcal{A}_H).$$
Note that for each $c \in \mathrm{crit}(\mathcal{A}_H)$ there is an
$S^1$-ambiguity for the choice of $\ell(c)$. For a lift $\ell$ we
introduce the following subset of $\mathcal{R}(B)$
$$\mathcal{R}(B,\ell)=\big\{[w] \in \mathcal{R}(B):
\mathrm{ev}_-([w]) \in \ell(\mathrm{crit}(\mathcal{A}_H))\big\}.$$
We say that a lift is $B$-\emph{admissible} if the following two
conditions hold.
\begin{description}
 \item[(i)] The restriction of the linearization of the
 flow equation of $\nabla_B \mathcal{A}^\mu_H$ to
 $\mathcal{R}(B,\ell)$ is surjective.
 \item[(ii)] The evaluation map
 $\mathrm{ev}_+|_{\mathcal{R}(B,\ell)}$ is transverse to
 $\mathrm{ev}_-$.
\end{description}
If $B \in \mathfrak{B}_{\mathrm{reg}}(J)$, then the evaluation maps
$\mathrm{ev}_-$ and $\mathrm{ev}_+$ are transverse to each other by
assumption and hence a generic lift $\ell$ is $B$-admissible. For
$c_-, c_+ \in \mathrm{crit}(\mathcal{A}_H)$, $J \in \mathcal{J}$, $B
\in \mathfrak{B}(J)$ and a lift $\ell$ we abbreviate
$$\mathcal{R}(c_-,c_+;B,\ell)=\big\{[w]
\in \mathcal{R}(B): \mathrm{ev}_-([w])=\ell(c_-),\,\,
\mathrm{ev}_+([w])\in S^1\cdot \ell(c_+)\big\}.$$ The condition for the
positive asymptotic can be rephrased by saying that
$\mathrm{ev}_+([w])$ is a critical point of $\mathcal{A}^\mu_H$ of
winding number zero. Hence the condition for the positive asymptotic
is actually independent of the choice of the lift. If $B \in
\mathfrak{B}_{\mathrm{reg}}(J)$, $\ell$ is a $B$-admissible lift,
and $c_- \neq c_+ \in \mathrm{crit}(\mathcal{A}_H)$, then the moduli
space $\mathcal{R}(c_-,c_+;B,\ell)$ is a smooth manifold of
dimension
$$\mathrm{dim}\mathcal{R}(c_-,c_+;B,\ell)=\mu_{CZ}(c_-)-
\mu_{CZ}(c_+)-1$$ and therefore coincides with the exspected
dimension of the moduli space of unparametrized Floer gradient flow
lines from $c_-$ to $c_+$.
\begin{thm}\label{finite}
Assume that the Conley-Zehnder indices of $c_-, c_+ \in
\mathrm{crit}(\mathcal{A}_H)$ satisfy
$\mu_{CZ}(c_-)=\mu_{CZ}(c_+)+1$, that $J \in \mathcal{J}^\nu$, $B
\in \mathfrak{B}^T_{\mathrm{reg}}(J)$, and $\ell$ is a
$B$-admissible lift, then the moduli space
$\mathcal{R}(\ell_{c_-},c_+;B)$ is a finite set.
\end{thm}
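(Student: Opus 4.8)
The plan is to reduce the statement to a Gromov--Floer compactness assertion and then eliminate every boundary phenomenon by an index count. By the structure result stated just before the theorem, the hypotheses ($B\in\mathfrak{B}_{\mathrm{reg}}(J)$, $\ell$ a $B$-admissible lift, $\mu_{CZ}(c_-)=\mu_{CZ}(c_+)+1$) imply that $\mathcal{R}(\ell_{c_-},c_+;B)=\mathcal{R}(c_-,c_+;B,\ell)$ is a smooth manifold of dimension $\mu_{CZ}(c_-)-\mu_{CZ}(c_+)-1=0$. Since a compact $0$-dimensional manifold is finite, I only have to prove compactness: every sequence $[w_k]=[(u_k,\eta_k)]$ in the moduli space should have a subsequence which, after reparametrising in the flow variable $s$, converges in $C^\infty_{\mathrm{loc}}$, with limit lying again in $\mathcal{R}(\ell_{c_-},c_+;B)$.

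First I would establish a uniform energy bound. The $S^1$-action on $\widetilde{\mathscr{L}}_E\times\mathbb{R}$ preserves $\mathcal{A}^\mu_H$ (its differential is invariant and the orbits are connected), so $\mathcal{A}^\mu_H$ has a well-defined value $a(c)$ on the set of winding-number-zero critical points lying over a given $c\in\mathrm{crit}(\mathcal{A}_H)$. Both asymptotics $\mathrm{ev}_-([w_k])=\ell(c_-)$ and $\mathrm{ev}_+([w_k])\in S^1\ell(c_+)$ have winding number zero, and along a flow line of the pseudo-gradient one has $\tfrac{d}{ds}\mathcal{A}^\mu_H(w_k(s))=-\,\mathfrak{m}_B\big(\nabla_B\mathcal{A}^\mu_H,\nabla_B\mathcal{A}^\mu_H\big)(w_k(s))\le 0$; integrating in $s$ gives $\int_{-\infty}^{\infty}\mathfrak{m}_B(\partial_s w_k,\partial_s w_k)\,ds=a(c_-)-a(c_+)$, independent of $k$. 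Here I use that $\mathfrak{m}_B$, though not symmetric, is nonnegative because $J^B$ is $\omega_E$-tame; this is precisely where the hypothesis $B\in\mathfrak{B}^T_{\mathrm{reg}}(J)$ (rather than merely $\mathfrak{B}_{\mathrm{reg}}(J)$) enters, and, once the $C^0$-bound below is available, this upgrades to an honest $L^2$-bound on $\partial_s w_k$.

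The step I expect to be the main obstacle is the a priori $C^0$-bound: the loops $u_k$ must stay in a fixed compact part of $E$ and the Lagrange multipliers $\eta_k$ in a fixed bounded interval. The horizontal block of \eqref{gr2} shows that $v_k:=p(u_k)$ solves Floer's equation on the \emph{closed} manifold $M$ for $(H,J)$, so the $v_k$ are automatically bounded. For the fibre coordinate I would run the fundamental estimate of Rabinowitz Floer theory, adapted to the present fibrewise-quadratic Hamiltonian $\widehat H$ on the negative line bundle: the asymptotic values $\eta_k(\pm\infty)$ are pinned down by $c_\pm$ since the winding numbers are zero, the set of values of $\eta$ on $\mathrm{crit}(\mathcal{A}^\mu_H)$ is discrete in $\mathbb{R}$ and supported over the unit circle bundle $\{\mu=0\}$ (this uses non-degeneracy of $H$), and combining these facts with the relation $\partial_s\eta_k=\int_0^1\mu(u_k)\,dt$ and the energy bound yields $|\eta_k|\le C$; with $\eta_k$ controlled, the vertical block of \eqref{gr2} together with the fibrewise quadratic shape of $\widehat H$ makes $|u_k|^2$ obey a differential inequality on $\mathbb{R}\times S^1$ to which a maximum principle applies — morally because the non-compact end of $(E,\omega_E)$ is of contact type, so the perturbed holomorphic curve equation cannot drive solutions out to the end — giving $|u_k|\le C$. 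Keeping the two estimates coupled through a simultaneous bootstrap, so as to avoid circularity, is the delicate point.

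Granted the uniform energy and $C^0$ bounds, the remaining step is routine. Elliptic bootstrapping produces a reparametrised subsequence converging in $C^\infty_{\mathrm{loc}}$; no sphere bubbles can form, since by $J\in\mathcal{J}^\nu$ together with condition (iii) for $B\in\mathfrak{B}_{\mathrm{reg}}(J)$ there are no non-constant $J^B_t$-holomorphic spheres in $E$, and any interior bubble would be one. Hence the limit is a broken trajectory running through critical points $c_-=a_0,a_1,\dots,a_N=c_+$ of $\mathcal{A}^\mu_H$ with $N\ge 1$ unbroken pieces, the $\mathbb{Z}\times S^1$-ambiguity at the first breakpoint being absorbed into a choice of lift. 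By condition (i) for $B\in\mathfrak{B}_{\mathrm{reg}}(J)$ every piece is transversally cut out, so each nonempty moduli space occurring has dimension $\mu_{CZ}(a_{i-1})-\mu_{CZ}(a_i)-1\ge 0$; summing over $i$ forces $\mu_{CZ}(c_-)-\mu_{CZ}(c_+)\ge N$, whence $N=1$ and no breaking occurs. Thus the subsequence converges in $\mathcal{R}(\ell_{c_-},c_+;B)$, the moduli space is compact, and being a smooth zero-dimensional manifold it is finite.
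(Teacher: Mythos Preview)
Your overall architecture is exactly the paper's: reduce Theorem~\ref{finite} to a $C^\infty_{\mathrm{loc}}$-compactness statement for the Rabinowitz flow lines (this is the content of the paper's Theorem~\ref{compact}) and then exclude breaking by regularity. The paper's proof is in fact quite terse here, simply invoking ``the usual breaking arguments in Morse homology'' once Theorem~\ref{compact} is available. So strategically you are on the same track.

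Two points are worth flagging. First, for the $C^0$-bound on $\eta$ the paper does something cleaner than your proposed ``fundamental estimate'' bootstrap. The shadow $Pw$ is a genuine Floer trajectory of $\nabla_J\mathcal{A}_H$, so not only $\mathcal{A}^\mu_H$ but also $\mathcal{A}_H\circ P$ is monotone along $w$; both are therefore uniformly bounded on the moduli space. At a critical point one computes $\eta=\mathcal{A}^\mu_H(w)-\mathcal{A}_H(Pw)$, and the Cieliebak--Frauenfelder estimate applied to the \emph{difference} $\mathcal{A}^\mu_H-\mathcal{A}_H\circ P$ (Proposition~\ref{lagbound}) gives the Lagrange-multiplier bound without the circularity you worry about. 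The $|u|$-bound is then a maximum-principle/convexity-at-infinity argument (Proposition~\ref{loopbound}). Your sketch would probably get there too, but the two-Lyapunov-functions trick is the clean mechanism the paper uses.

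Second, your breaking count treats the situation as Morse rather than Morse--Bott: the critical set of $\mathcal{A}^\mu_H$ consists of circles, so the na\"ive formula ``each piece has dimension $\mu_{CZ}(a_{i-1})-\mu_{CZ}(a_i)-1$'' is not literally correct (compare the $\varepsilon\in\{0,1\}$ ambiguity in the proof of Theorem~\ref{oni}, or the more careful analysis in the proof of Proposition~\ref{independent}). What actually does the work is condition~(ii) in the definitions of $\mathfrak{B}_{\mathrm{reg}}(J)$ and of $B$-admissible lift: the transversality of $\mathrm{ev}_+$ and $\mathrm{ev}_-$ makes the matched (fiber-product) broken moduli spaces manifolds of the expected dimension, and then the total index constraint forces $N=1$. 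Your conclusion is right, but you should invoke the evaluation-map transversality rather than a per-piece index inequality.
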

Under the assumptions of the theorem, we can now set
$$\varrho(c_-,c_+;B,\ell)=\#_2 \mathcal{R}(c_-,c_+;B,\ell)$$
the cardinality modulo two of the moduli spaces. If the
Conley-Zehnder indices do not satisfy
$\mu_{CZ}(c_-)-\mu_{CZ}(c_+)=1$, then we set
$\varrho(c_-,c_+;B,\ell)=0$.
\begin{prop}\label{independent}
If $J \in \mathcal{J}^\nu_{\mathrm{reg}}$, $B \in
\mathfrak{B}^T_{\mathrm{reg}}(J)$ and $\ell$ is a $B$-admissible
lift, then the numbers $\varrho(c_-,c_+;B,\ell)$ are independent of
the choice of $\ell$.
\end{prop}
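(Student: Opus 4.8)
The plan is to deduce the statement from a mod-$2$ degree argument for the evaluation map $\mathrm{ev}_-$ on an enlarged one-parameter moduli space. We may assume $\mu_{CZ}(c_-)=\mu_{CZ}(c_+)+1$, since otherwise $\varrho(c_-,c_+;B,\ell)=0$ for every $\ell$ by definition. The first point is that $\mathcal{R}(c_-,c_+;B,\ell)$ depends on the lift $\ell$ only through the point $\ell(c_-)$: the constraint on $\mathrm{ev}_+([w])$ is that it be a critical point over $c_+$ of winding number zero and hence does not involve $\ell$, while the constraint on $\mathrm{ev}_-([w])$ is simply that it equal $\ell(c_-)$. As $\ell$ ranges over all $B$-admissible lifts, $\ell(c_-)$ ranges over the circle $\mathcal{C}:=\mathfrak{w}^{-1}(0)\cap P^{-1}(c_-)$ which, like the analogous circle $\mathcal{C}'$ over $c_+$, is a connected component of $\mathrm{crit}(\mathcal{A}^\mu_H)$. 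So it suffices to show that $\#_2\big\{[w]\in\mathcal{R}(B):\mathrm{ev}_-([w])=d,\ \mathrm{ev}_+([w])\in\mathcal{C}'\big\}$ coincides for $d=\ell_0(c_-)$ and $d=\ell_1(c_-)$, for any two $B$-admissible lifts $\ell_0,\ell_1$.

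Consider therefore the moduli space
$$N:=\big\{[w]\in\mathcal{R}(B):\mathrm{ev}_-([w])\in\mathcal{C},\ \mathrm{ev}_+([w])\in\mathcal{C}'\big\}.$$
Since $\mathcal{C},\mathcal{C}'$ are connected components of $\mathrm{crit}(\mathcal{A}^\mu_H)$ and $\mathrm{ev}_\pm$ are continuous, $N$ is a union of connected components of $\mathcal{R}(B)$, hence a smooth boundaryless manifold by Proposition \ref{generic}(i). For a $B$-admissible lift $\ell$ the fibre $(\mathrm{ev}_-|_N)^{-1}(\ell(c_-))$ equals $\mathcal{R}(c_-,c_+;B,\ell)$, which is a smooth $0$-manifold by the dimension formula stated just before Theorem \ref{finite}; thus $\ell(c_-)$ is a regular value of $\mathrm{ev}_-|_N$ and $\dim N=\dim\mathcal{C}=1$.

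The analytic heart of the proof is the compactness of $N$, which is established by running the compactness analysis underlying Theorem \ref{finite} in a one-parameter family. A sequence in $N$ cannot escape to infinity in the fibre direction of $E$ because of the a priori $C^0$-bounds for the Rabinowitz gradient flow, in which the Lagrange multiplier $\eta$ together with $\mu$ confines $u$ to a fixed compact subset of $E$; no nonconstant $J^B_t$-holomorphic sphere can bubble off since $B\in\mathfrak{B}^T_{\mathrm{reg}}(J)$ satisfies condition (iii); no oni can bubble off, by the same transversality input that proves Theorem \ref{finite}, which applies here uniformly in the asymptotics because $\mathcal{C},\mathcal{C}'$ and $\mathrm{crit}(\mathcal{A}^\mu_H)$ are compact; and there is no breaking, because the integer winding number $\mathfrak{w}$ is constant along flow lines of $\nabla_B\mathcal{A}^\mu_H$, hence equal to $0$ at every intermediate level, so a broken configuration would project to a chain of critical points of $\mathcal{A}_H$ with Conley--Zehnder index strictly decreasing between $\mu_{CZ}(c_-)$ and $\mu_{CZ}(c_+)=\mu_{CZ}(c_-)-1$, of which there are none. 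Hence $N$ is a closed smooth $1$-manifold, i.e.\ a finite disjoint union of circles.

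It remains to apply the mod-$2$ degree: $\mathrm{ev}_-|_N\colon N\to\mathcal{C}\cong S^1$ is a smooth map of a closed $1$-manifold, its fibres over regular values are finite, and $\#_2(\mathrm{ev}_-|_N)^{-1}(d)$ is the same for all regular values $d$. Since $\ell_0(c_-)$ and $\ell_1(c_-)$ are regular values by the previous paragraph, this gives $\varrho(c_-,c_+;B,\ell_0)=\varrho(c_-,c_+;B,\ell_1)$, as desired. I expect the main obstacle to be precisely the compactness of the one-parameter family $N$: one must be sure that the exclusion of oni bubbling proven for the zero-dimensional moduli spaces in Theorem \ref{finite} survives in the one-dimensional family, uniformly over $d\in\mathcal{C}$; granting that, the rest of the argument is formal.
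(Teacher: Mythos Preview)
Your overall strategy---forming a one-dimensional moduli space and reading off the invariance from a cobordism/degree argument---is essentially the paper's, which parametrises by an interval $\{\ell_r\}_{r\in[0,1]}$ and looks at $\partial\mathcal{R}(B,L)=\mathcal{R}(B,\ell_0)\sqcup\mathcal{R}(B,\ell_1)$ rather than taking the whole circle $\mathcal{C}$ and a mod-$2$ degree. That difference is cosmetic.

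The compactness step, however, contains a genuine error. You write that ``the integer winding number $\mathfrak{w}$ is constant along flow lines of $\nabla_B\mathcal{A}^\mu_H$, hence equal to $0$ at every intermediate level.'' This is false: $\mathfrak{w}$ is defined only at the asymptotic critical points, and it \emph{does} change from one end of a flow line to the other. The onis of the paper are precisely flow lines with $\mathfrak{w}(\mathrm{ev}_-)=1$ and $\mathfrak{w}(\mathrm{ev}_+)=0$, and the vortices (flow lines with constant shadow $Pw$) strictly shift $\mathfrak{w}$ as well. So a broken configuration $[w^1]\#[w^2]$ with both outer asymptotics at winding $0$ can perfectly well break at a critical point of nonzero winding number; your downstairs index count then collapses because one of the shadow pieces $Pw^i$ may be constant, and the chain of Conley--Zehnder indices in $M$ need not be strictly decreasing.

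The paper closes this gap as follows. Project the putative broken trajectory: $[Pw^1]\#[Pw^2]$ is a broken $\nabla_J\mathcal{A}_H$-flow line from $c_-$ to $c_+$ with total index drop $1$, so regularity of $J\in\mathcal{J}^\nu_{\mathrm{reg}}$ forces one of $Pw^1$, $Pw^2$ to be constant. The corresponding $w^i$ is then a nonconstant vortex, which strictly changes $\mathfrak{w}$; consequently the \emph{other} piece $w^j$ connects critical points whose winding numbers differ, and a direct index computation (using $\mu_{CZ}(\gamma)=\mu_{CZ}(P\gamma)-2\mathfrak{w}(\gamma)$) places $w^j$ in a moduli space of negative virtual dimension, contradicting $B\in\mathfrak{B}_{\mathrm{reg}}(J)$. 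That is the mechanism excluding breaking; your ``$\mathfrak{w}$ constant'' shortcut is not available.
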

Hence under the assumptions of the proposition we are allowed to set
$$\varrho(c_-,c_+;B)=\varrho(c_-,c_+;B,\ell)$$
for any $B$-admissible lift $\ell$. The following Lemma is an
immediate consequence of Gromov compactness applied to the shadows
$P(w)$ of flow lines of $\nabla_B \mathcal{A}_H^\mu$ by noting that
these are flow lines of $\nabla_J \mathcal{A}_H$.
\begin{lemma}
If $J \in \mathcal{J}^\nu_{\mathrm{reg}}$, and $B \in
\mathfrak{B}^T_{\mathrm{reg}}(J)$, then for each real number $r \in
\mathbb{R}$ and $c_-\in\mathrm{crit}(\mathcal{A}_H)$ the set $\big\{c_+ \in \mathrm{crit}(\mathcal{A}_H):
\mathcal{A}_H(c_+)>r, \,\, \varrho(c_-,c_+;B) \neq 0\big\}$ is finite.
\end{lemma}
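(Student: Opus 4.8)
The plan is to read off an a priori energy bound from the shadow of a counted flow line and then invoke Gromov compactness on the base $M$, exactly as the remark preceding the statement indicates; here $c_-$ is regarded as fixed and $c$ is to be read as $c_+$, this being precisely what is needed to guarantee that $\partial$ sends $CF_*(H)$ into $CF_{*-1}(H)$.

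First I would unwind the definitions. If $\varrho(c_-,c_+;B)\neq 0$ then $\mu_{CZ}(c_+)=\mu_{CZ}(c_-)-1$ and, for a $B$-admissible lift $\ell$, the moduli space $\mathcal{R}(c_-,c_+;B,\ell)$ is nonempty; pick a representative $w=([u,\bar u],\eta)$ and set $v:=p\circ u\colon\mathbb{R}\times S^1\to M$. Under the identification $\HH_e\cong T_{p(e)}M$ the second line of (\ref{gr2}) reads $\partial_s v+J_t(v)\big(\partial_t v-X_{H_t}(v)\big)=0$, so $v$ is a Floer trajectory on $M$ asymptotic to the $1$-periodic orbits underlying $c_-$ and $c_+$. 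Equipping these orbits with the filling disks carried by $c_-$ and $c_+$ --- which are the ones induced by $\bar u$ together with the winding-number-zero normalisation built into $\ell$ --- the standard Floer energy identity gives
$$\int_{\mathbb{R}\times S^1}|\partial_s v|^2\,ds\,dt=\mathcal{A}_H(c_-)-\mathcal{A}_H(c_+).$$
As the index difference is $1$ the trajectory is nonconstant, so the left-hand side is positive and $\mathcal{A}_H(c_+)<\mathcal{A}_H(c_-)$; together with the hypothesis $\mathcal{A}_H(c_+)>r$ this confines the energy of $v$ to the fixed interval $\big(0,\mathcal{A}_H(c_-)-r\big)$, which depends only on $c_-$ and $r$.

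It then remains to show that only finitely many critical points $c_+$ are reachable from $c_-$ by a Floer trajectory on $M$ of energy below a fixed bound. I would argue by contradiction: given infinitely many pairwise distinct such $c_+^{(k)}$ with shadows $v_k$ of uniformly bounded energy, Gromov compactness for Floer's equation on the closed manifold $(M,\omega)$ --- which holds with no semipositivity hypothesis, bubbling of holomorphic spheres merely being allowed in the limit --- yields a subsequence converging to a broken Floer trajectory with sphere bubbles whose positive asymptotic end is a single critical point $c_\infty$. Gromov convergence fixes the $1$-periodic orbit underlying that end for all large $k$ (periodic orbits are isolated, by nondegeneracy of $H$) and conserves the total relative homology class, hence also the filling disk read off at the positive end; therefore $c_+^{(k)}=c_\infty$ for all large $k$, contradicting pairwise distinctness. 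The one step I expect to need genuine care is exactly this last bookkeeping: verifying that the homology classes of the sphere bubbles in the Gromov limit are absorbed into the limiting filling disk, so that the full relative class of $v_k$ --- and not merely its underlying loop --- stabilises. A self-contained alternative avoiding any limiting argument: write $c_+=A\cdot c_+^0$ for a deck transformation $A\in\Gamma$ and a fixed representative $c_+^0$ of its $\Gamma$-orbit; the energy interval bounds $\langle[\omega],A\rangle$, the constraint $\mu_{CZ}(c_+)=\mu_{CZ}(c_-)-1$ fixes $\langle c_1(TM),A\rangle$, and since $[\omega]$ is integral by the Bohr-Sommerfeld hypothesis only finitely many values of $\langle[\omega],A\rangle$ then survive; as $\ker\omega\cap\ker c_1(TM)$ is trivial in $\Gamma$ and there are only finitely many $\Gamma$-orbits of critical points of $\mathcal{A}_H$, finiteness follows.
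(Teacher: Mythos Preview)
Your first approach---bounding the energy of the shadow $Pw$ by $\mathcal{A}_H(c_-)-r$ and invoking Gromov compactness on $M$---is exactly the paper's argument, which is stated in one line in the sentence preceding the lemma and not elaborated further. Your expansion is more careful than the paper's, and you correctly isolate the one point that needs attention (stabilisation of the filling disk at the positive end).

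Your second, algebraic approach is a genuinely different route. Once the existence of a nonconstant shadow gives $r<\mathcal{A}_H(c_+)<\mathcal{A}_H(c_-)$, you use integrality of $[\omega]$ to force $\langle[\omega],A\rangle$ into a finite set, the index constraint to pin down $\langle c_1,A\rangle$, and the very definition of $\Gamma$ to conclude that only finitely many $A$ survive. This buys you a self-contained proof that uses no compactness theorem beyond the energy identity, makes the role of the Bohr--Sommerfeld hypothesis explicit, and avoids precisely the filling-disk bookkeeping you flagged in the first approach. The paper's route is quicker to state but leaves that bookkeeping implicit; your algebraic alternative is in fact what is doing the real work even inside the Gromov-compactness argument, since stabilisation of the positive asymptotic ultimately rests on the same discreteness of $(\omega,c_1)$-values.
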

In view of the Lemma we can define a $\mathbb{Z}_2$-linear map
$$\mathfrak{R} \colon CF_*(H) \to CF_{*-1}(H)$$
which is given for $\xi=\sum_{c \in \mathrm{crit}(\mathcal{A}_H)}
\xi_c c \in CF_*(H)$ by
$$\mathfrak{R}(\xi)=\sum_{c' \in \mathrm{crit}(\mathcal{A}_H)}
\sum_{c \in \mathrm{crit}(\mathcal{A}_H)} \xi_c \varrho(c,c';B)
c'.$$ We refer to $\mathfrak{R}$ as the Rabinowitz Floer map. Since
the winding number is unchanged under the action of
$\Gamma_0=\pi_2(M)/{\mathrm{ker}(\omega)}=\pi_2(E)/{\mathrm{ker}(\omega_E)}$
on $\widetilde{\mathscr{L}}_E$ we conclude that the Rabinowitz Floer
map is linear with respect the action of the group ring
$\mathbb{Z}_2[\Gamma_0]$ on $CF_*(H)$.
\\ \\
We next describe the square of the Rabinowitz Floer map. For this we
need the following Definition.
\begin{fed}
Assume that $J \in \mathcal{J}$ and $B \in \mathfrak{B}(J)$. An
\emph{Oni} is an element $[w] \in \mathcal{R}(B)$ whose asymptotics
satisfy $\mathfrak{w}(\mathrm{ev}_-(w))=1$ and
$\mathfrak{w}(\mathrm{ev}_+(w))=0$.
\end{fed}
We denote by $\mathcal{O}(B)$ the moduli space of Onis. If $B=0 \in
\mathfrak{B}(J)$ then $M$ interpreted as zero section in $E$ is a
complex submanifold of $J^0$. Therefore by positivity of
intersections Onis cannot exist and we have
$$\mathcal{O}(0)=\emptyset.$$
But without perturbation all bubbles in $M$ survive. In view of this
the following statement makes sense: \emph{Onis are born out of
bubbles.}
\\ \\
If $c_-, c_+ \in \mathrm{crit}(\mathcal{A}_H)$ we abbreviate
$$\mathcal{O}(c_-,c_+;B)=\big\{[w] \in \mathcal{O}(B):
P(\mathrm{ev}_\pm(w))=c_\pm\big\}.$$ If $B \in
\mathfrak{B}_{\mathrm{reg}}(J)$ the moduli space of Onis is a smooth
manifold and its dimension is given by
$$\mathrm{dim}\mathcal{O}(c_-,c_+;B)=\mu_{CZ}(c_-)-\mu_{CZ}(c_+)-2.$$
If $J \in \mathcal{J}^\nu$, $B \in
\mathfrak{B}^T_{\mathrm{reg}}(J)$, and $c_-,c_+ \in
\mathrm{crit}(\mathcal{A}_H)$ satisfy $\mu(c_-)-\mu(c_+)=2$, then
the same compactness arguments which lead to Theorem~\ref{finite}
also show that the moduli space of Onis $\mathcal{O}(c_-,c_+;B)$ is
a finite set. Hence in this situation we set
$$\varpi(c_-,c_+;B)=\#_2 \mathcal{O}(c_-,c_+;B)$$
and define a map
$$\mathfrak{O} \colon CF_*(H) \to CF_{*-2}(H)$$
which is given for $\xi=\sum_{c \in \mathrm{crit}(\mathcal{A}_H)}
\xi_c c \in  CF_*(H)$ by
$$\mathfrak{O}(\xi)=\sum_{c' \in \mathrm{crit}(\mathcal{A}_H)}
\sum_{c \in \mathrm{crit}(\mathcal{A}_H)} \xi_c \varpi(c,c';B) c'.$$
We refer to the map $\mathfrak{O}$ as the \emph{Oni-map}. The
Oni-map is the obstruction for the Rabinowitz Floer map to be a
boundary operator as the following theorem shows.
\begin{thm}\label{oni}
Assume that $J \in \mathcal{J}^\nu_{\mathrm{reg}}$, $B \in
\mathfrak{B}^T_{\mathrm{reg}}(J)$, then
$\mathfrak{R}^2=\mathfrak{O}$.
\end{thm}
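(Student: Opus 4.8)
The plan is to run the usual argument by which one proves $\partial^2=0$ in Floer homology --- analysing the boundary of a compact one--dimensional moduli space and counting its boundary points modulo two --- the only new feature being that the boundary here carries an extra stratum made of Onis. Since $\mathfrak{R}^2$ and $\mathfrak{O}$ are both $\mathbb{Z}_2$--linear and lower the Conley--Zehnder grading by two, it suffices to prove that for all $c_-,c_+\in\mathrm{crit}(\mathcal{A}_H)$
$$\sum_{c''\in\mathrm{crit}(\mathcal{A}_H)}\varrho(\ell_{c_-},c'';B)\,\varrho(\ell_{c''},c_+;B)=\varpi(c_-,c_+;B).$$
If $\mu_{CZ}(c_-)-\mu_{CZ}(c_+)\neq 2$ both sides vanish for index reasons, so I would assume $\mu_{CZ}(c_-)-\mu_{CZ}(c_+)=2$; then $\mathcal{R}(c_-,c_+;B,\ell)$ is a smooth one--manifold, $\mathcal{O}(c_-,c_+;B)$ is a finite set, and the strategy is to compactify $\mathcal{R}(c_-,c_+;B,\ell)$ to a compact one--manifold with boundary whose boundary is the disjoint union of the broken Rabinowitz flow lines and of a set that is in bijection modulo two with $\mathcal{O}(c_-,c_+;B)$.

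The first step is compactness. The energy of an element of $\mathcal{R}(c_-,c_+;B,\ell)$ equals $\mathcal{A}^\mu_H(\ell_{c_-})-\mathcal{A}^\mu_H(\ell_{c_+})$, in particular it is finite, because $\mathcal{A}^\mu_H$ is constant along $S^1$--orbits of critical points; together with the a priori $C^0$--bound coming from the Lagrange multiplier $\eta$ and with the $\omega_E$--tameness of $J^B$ (that is, $B\in\mathfrak{B}^T(J)$) this makes the Gromov--Floer compactness machinery in the total space $E$ available. Crucially, $J\in\mathcal{J}^\nu$ and condition (iii) of $\mathfrak{B}_{\mathrm{reg}}(J)$ exclude nonconstant $J^B_t$--holomorphic spheres in $E$, so a non--converging sequence $[w_n]$ can only break into a string of flow lines of $\nabla_B\mathcal{A}^\mu_H$. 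The shadows $P(w_n)$, however, are genuine Floer cylinders in $M$, where spheres are not excluded, and a concentration of energy in the shadow onto a holomorphic sphere in a class $A$ with $\langle[\omega],A\rangle>0$ cannot be matched by a bubble in $E$: a holomorphic lift of such a sphere would be a holomorphic section of the negative line bundle $E|_A$ over $\mathbb{P}^1$, hence zero. The effect is that the lifted sequence instead breaks off a component along which the winding number drops. By the same positivity--of--intersection mechanism which forbids Onis when $B=0$, applied near the periodic orbits where $B$ vanishes and where the zero section is therefore $J^B$--complex, this drop is by a strictly positive integer --- it measures the lost $\nu\langle[\omega],\cdot\rangle$ --- and it equals $1$ at the codimension--one strata. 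A component along which the winding number drops from $1$ to $0$ is by definition an Oni.

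Thus $\overline{\mathcal{R}}(c_-,c_+;B,\ell)$ has exactly two types of codimension--one boundary stratum. The first is the ordinary broken Rabinowitz flow line, a breaking at a critical point of winding number $0$ over some $c''$, giving a pair in $\mathcal{R}(c_-,c'';B,\ell)\times\mathcal{R}(c'',c_+;B,\ell)$ with $\mu_{CZ}(c_-)-\mu_{CZ}(c'')=\mu_{CZ}(c'')-\mu_{CZ}(c_+)=1$; modulo two this stratum contributes the left--hand side of the displayed identity. The second type is a broken configuration one of whose components is an Oni, the complementary component being rigid; the latter is transversally cut out because $B\in\mathfrak{B}_{\mathrm{reg}}(J)$, and rigidity of both pieces forces the CZ--index of the intermediate orbit to equal that of $c_-$, so the complementary component has an index--$0$ shadow and is therefore constant for $J\in\mathcal{J}_{\mathrm{reg}}$; it thus lives over a single periodic orbit inside $p^{-1}(\mathcal{U})$, where $\mathfrak{m}_B$ is an honest metric, and the $S^1$--symmetry of this fiber model forces the number of such components from $\ell_{c_-}$ to the winding--$1$ critical circle to be one. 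Hence the second stratum is in bijection modulo two with $\mathcal{O}(c_-,c_+;B)$ and contributes $\varpi(c_-,c_+;B)$. A gluing theorem --- standard for the broken Rabinowitz flow lines, carried out across a winding jump for the Oni configurations --- shows conversely that each such configuration is the limit of a unique end of $\mathcal{R}(c_-,c_+;B,\ell)$, so that $\overline{\mathcal{R}}(c_-,c_+;B,\ell)$ really is a compact smooth one--manifold with boundary. A compact one--manifold has an even number of boundary points, whence $\langle\mathfrak{R}^2c_-,c_+\rangle+\varpi(c_-,c_+;B)=0$ in $\mathbb{Z}_2$, which is the assertion.

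I expect the genuine obstacle to be the compactness and gluing at the second boundary stratum: showing that a degeneration of $\mathcal{R}(c_-,c_+;B,\ell)$ in which the shadow concentrates energy converges precisely to a configuration of the form (Oni)$\#$(rigid fiber flow line) --- in particular that the winding number jumps by exactly $1$ in codimension one, that no energy leaks into the noncompact fibers of $E$, and that the index count makes this a genuine codimension--one phenomenon --- and then that such a configuration can be glued back, the subtlety being that the gluing is performed across a jump of the winding number, so the relevant linearized operator and its Fredholm index must be analysed by hand. The explicit computation in the fiber model over a nondegenerate periodic orbit, showing that the attaching component is a single point, is a further necessary ingredient. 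The remaining steps --- the parity count, the identification of the first stratum with $\mathfrak{R}^2$, and the reduction to matrix coefficients --- are routine.
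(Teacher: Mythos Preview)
Your overall architecture---compactify the one--dimensional moduli space $\mathcal{R}(c_-,c_+;B,\ell)$ and count boundary points modulo two---is exactly the paper's, and you correctly anticipate that the boundary splits into an ``$\mathfrak{R}^2$--stratum'' (intermediate winding number $0$) and an ``Oni--stratum''. But the mechanism you describe for the Oni stratum is not the one at work, and two genuine steps are missing.

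The bubbling and positivity--of--intersection story is a red herring: since $B\in\mathfrak{B}^T_{\mathrm{reg}}(J)$ there are no $J^B$--spheres in $E$, so by the compactness theorem the boundary consists \emph{only} of honest broken flow lines $[w^1]\#[w^2]$ in $E$. The analysis proceeds not via bubbling of the shadow but via indices of the shadow. The broken shadow $[Pw^1]\#[Pw^2]$ is a broken Floer trajectory in $M$ with total index drop $2$, so by $J\in\mathcal{J}_{\mathrm{reg}}$ either both $Pw^i$ are nonconstant of index drop $1$ each---and then the relation $\mu_{CZ}(\gamma)=\mu_{CZ}(P\gamma)-2\mathfrak{w}(\gamma)$ forces $\mathfrak{w}(\gamma)=0$, the first stratum---or one of them is constant. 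When $Pw^1$ is constant one computes $\mathfrak{w}(\gamma)=1$, so $w^1$ is a fibre vortex and $w^2$ is an Oni. Your argument assumes this structure rather than deriving it, and in particular you omit the case $Pw^2$ constant: there one finds $\mathfrak{w}(\gamma)=0$ and $P\gamma=c_+$, so $\gamma\in S^1\ell(c_+)$, and then strict monotonicity of the action along a nonconstant flow line forces $w^2$ itself to be constant---a contradiction, but one that has to be checked. Finally, the vortex count is not settled by ``$S^1$--symmetry'': that the number of unparametrised fibre vortices from a fixed point of winding $0$ to the winding--$1$ circle equals one modulo two is a separate theorem (the vortex number theorem in the appendix), proved via vanishing of Rabinowitz Floer homology for the displaceable unit circle in $\mathbb{C}$, and it is what pairs the Oni stratum bijectively with $\mathcal{O}(c_-,c_+;B)$.
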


\section{Proofs}

\subsection{Proof of Proposition~\ref{independent}}

Given two $B$-admissible lifts $\ell_0,\ell_1 \colon
\mathrm{crit}(\mathcal{A}_H) \to \mathrm{crit}(\mathcal{A}^\mu_H)$
and two critical points $c_-,c_+ \in \mathrm{crit}(\mathcal{A}_H)$
satisfying $\mu_{CZ}(c_-)=\mu_{CZ}(c_+)+1$ we have to show that
\begin{equation}\label{indep}
\varrho(c_-,c_+;B,\ell_0)=\varrho(c_-,c_+;B,\ell_1).
\end{equation}
Choose a smooth family $L=\{\ell_r\}_{r \in [0,1]}$ of lifts $\ell_r
\colon \mathrm{crit}(\mathcal{A}_H) \to
\mathrm{crit}(\mathcal{A}^\mu_H)$ interpolating between $\ell_0$ and
$\ell_1$. We consider the moduli space
$$\mathcal{R}(B,L)=\big\{(r,[w]): r \in [0,1],\,\,[w]\in
\mathcal{R}(B,\ell_r)\big\}.$$ The boundary of this moduli space is
given by
$$\partial\mathcal{R}(B,L)=\mathcal{R}(B,\ell_0) \sqcup
\mathcal{R}(B,\ell_1).$$ Therefore to prove (\ref{indep}) it
suffices to show that $\mathcal{R}(B,L)$ is compact. In view of
compactness for the gradient flow equation of Rabinowitz action
functional, see Theorem~\ref{compact}, the only obstruction to
compactness is breaking of gradient flow lines. Since $B$ is
regular, and we consider a 1-dimensional moduli problem it remains
to rule out two times broken flow lines $[w^1]\#[w^2]$ of $\nabla_B
\mathcal{A}^\mu_H$ for which there exists $r \in [0,1]$ such that
\begin{equation}\label{asy1}
\left.\begin{array}{c}
\mathrm{ev}_-([w^1])=\ell_r(c_-),\\
\mathrm{ev}_+([w^1])=\mathrm{ev}_-([w^2]),\\
\mathfrak{w}\big(\mathrm{ev}_+[w^2])\big)=0,\\
P\mathrm{ev}_+[w^2]=c_+.
\end{array}\right\}.
\end{equation}
Its shadow $[Pw^1] \# [Pw^2]$ is then a broken flow line of
$\nabla_J\mathcal{A}_H$ satisfying
\begin{equation}\label{asy2}
\left.\begin{array}{c}
\mathrm{ev}_-([Pw^1])=c_-,\\
\mathrm{ev}_+([Pw^1])=\mathrm{ev}_-([Pw^2]),\\
\mathrm{ev}_+[Pw^2]=c_+.
\end{array}\right\}.
\end{equation}
Since $J$ is regular, we conclude that either $Pw^1$ or $Pw^2$ has
to be constant. We first rule out the case that $Pw^1$ is constant.
If this case occured, the positive asymptotic of the first flow line
would satisfy
$$P\mathrm{ev}_+([w^1])=c_-, \quad
\mathfrak{w}(\mathrm{ev}_+[w^1])>0.$$ But then $w^2$ belongs to a
moduli space of negative virtual dimension which contradicts the
assumption that $B$ is regular. Hence the case that $Pw^1$ is
constant does not occur. If $Pw^2$ is constant, the negative
asymptotic of the second flow line meets the condition
$$P\mathrm{ev}_-([w^2])=c_+, \quad
\mathfrak{w}(\mathrm{ev}_-[w^2])<0$$ implying that $w^1$ lies in a
moduli space of negative virtual dimension. Again this contradicts
the regularity of the perturbation $B$. Hence no breaking occurs and
the Proposition is proved. \hfill $\square$

\subsection{Proof of Theorem~\ref{oni}}

We pick $c_-,c_+ \in \mathrm{crit}(\mathcal{A}_H)$ satisfying
$\mu_{CZ}(c_-)=\mu_{CZ}(c_+)+2$. We have to prove
\begin{equation}\label{onieq}
\sum_{c \in \mathrm{crit}(\mathcal{A}_H)}\varrho(c_-,c;B)
\varrho(c,c_+;B)=\varpi(c_-,c_+;B).
\end{equation}
By Proposition~\ref{independent} the number
$\varrho(c,c_+;B)=\varrho(c,c_+;B,\ell)$ is independent of the
choice of the $B$-admissible lift $\ell$. Therefore for a
$B$-admissible lift $\ell$ the left hand side of (\ref{onieq}) can
be interpreted as the modulo two number of unparametrized
nonconstant broken flow lines $[w^1]\#[w^2]$ of $\nabla_B
\mathcal{A}_H^\mu$ subject to the following asymptotic conditions
\begin{equation}\label{as1}
\left.\begin{array}{c}
\mathrm{ev}_-([w^1])=\ell(c_-),\\
\mathrm{ev}_+([w^1])=\mathrm{ev}_-([w^2]),\\
\mathfrak{w}\big(\mathrm{ev}_+([w^1])\big)=0,\\
\mathfrak{w}\big(\mathrm{ev}_+[w^2])\big)=0,\\
P\mathrm{ev}_+[w^2]=c_+.
\end{array}\right\}
\end{equation}
Let us consider the one dimensional moduli space
$\mathcal{R}(\ell_{c_-},c_+;B)$ of all unparametrized flow lines
from $\nabla_B \mathcal{A}_H^\mu$ from $\ell_{c_-}$ to a point in
$S^1 \ell_{c_+}$. By compactness for gradient flow lines of
$\nabla_B \mathcal{A}_H^\mu$ we conclude that
$\mathcal{R}(\ell_{c_-},c_+;B)$ can be compactified to a one
dimensional manifold with boundary whose boundary points are
unparametrized nonconstant broken flow lines $[w^1]\#[w^2]$ of
$\nabla_B \mathcal{A}_H^\mu$ which meet the asymptotic conditions
$$\left.\begin{array}{c}
\mathrm{ev}_-([w^1])=\ell(c_-),\\
\mathrm{ev}_+([w^1])=\mathrm{ev}_-([w^2]),\\
\mathfrak{w}\big(\mathrm{ev}_+[w^2])\big)=0,\\
P\mathrm{ev}_+[w^2]=c_+.
\end{array}\right\}$$
Since the number of boundary points of a compact one dimensional
manifold is even, we conclude that the modulo two number of broken
flow lines $[w^1]\#[w^2]$ subject to the asymptotic condition
(\ref{as1}) coincides with the modulo two number of unparametrized
nonconstant broken flow lines satisfying
\begin{equation}\label{as2}
\left.\begin{array}{c}
\mathrm{ev}_-([w^1])=\ell(c_-),\\
\mathrm{ev}_+([w^1])=\mathrm{ev}_-([w^2]),\\
\mathfrak{w}\big(\mathrm{ev}_+([w^1])\big) \neq 0,\\
\mathfrak{w}\big(\mathrm{ev}_+[w^2])\big)=0, \\
P\mathrm{ev}_+[w^2]=c_+.
\end{array}\right\}
\end{equation}
In order to ease notation we abbreviate
$$\gamma=\mathrm{ev}_+([w^1]) \in
\mathrm{crit}(\mathcal{A}^\mu_H).$$ We first note that there exists
$$\varepsilon \in \{0,1\}$$
such that
$$\mu_{CZ}(\ell(c_-))-\mu_{CZ}(\gamma)=1+\varepsilon, \quad
\mu_{CZ}(\gamma)-\mu_{CZ}(\ell(c_+))=1-\varepsilon.$$ The reason for
the ambiguity in the index computation lies in the fact that
$\gamma$ is not an isolated critical point of $\mathcal{A}_H^\mu$
but lies in a circle family of critical points. If $P \colon
\widetilde{\mathscr{L}}_E \times \mathbb{R}\to
\widetilde{\mathscr{L}}_M$ is the projection we have
$$\mu_{CZ}(\gamma)=\mu_{CZ}(P\gamma)-2\mathfrak{w}(\gamma).$$
Since the winding numbers of $\ell(c_-)$ and $\ell(c_+)$ vanish, we
get
$$\mu_{CZ}(\ell(c_-))=\mu_{CZ}(c_-), \quad \mu_{CZ}(\ell(c_+))=
\mu_{CZ}(c_+).$$ Using these facts we compute
$$\mu_{CZ}(c_-)-\mu_{CZ}(P\gamma)=\mu_{CZ}(\ell(c_-))-\mu_{CZ}(\gamma)
-2\mathfrak{w}(\gamma)=1+\varepsilon-2\mathfrak{w}(\gamma)$$ and
$$\mu_{CZ}(P\gamma)-\mu_{CZ}(c_+)=\mu_{CZ}(\gamma)+2\mathfrak{w}(\gamma)
-\mu_{CZ}(\ell(c_+))=1-\varepsilon+2\mathfrak{w}(\gamma).$$ We now
consider the broken flow line $[Pw^1]\#[Pw^2]$ of $\nabla_J
\mathcal{A}_H$. Since $J \in \mathcal{J}^\nu_{\mathrm{reg}}$ there
are three cases to distinguish.
\\ \\
\textbf{Case~1: }The two flow lines $Pw^1$ and $Pw^2$ are not
constant. In this case
$$\mu_{CZ}(c_-)-\mu_{CZ}(P\gamma)=1, \quad
\mu_{CZ}(P\gamma)-\mu_{CZ}(c_+)=1.$$ Hence
$$\varepsilon=0, \quad \mathfrak{w}(\gamma)=0.$$
Therefore the broken flow line $[w^1]\#[w^2]$ does not satisfy the
asymptotic condition (\ref{as2}).
\\ \\
\textbf{Case~2: } The flow line $Pw^1$ is constant. In this case
$P\gamma=c_-$ and
$$\mu_{CZ}(c_-)-\mu_{CZ}(P\gamma)=0, \quad
\mu_{CZ}(P\gamma)-\mu_{CZ}(c_+)=2.$$ Hence
$$\varepsilon=1, \quad \mathfrak{w}(\gamma)=1.$$
Therefore the broken flow line $[w^1]\#[w^2]$ satisfies the
asymptotic condition (\ref{as2}).
\\ \\
\textbf{Case~3: }The flow line $Pw^2$ is constant. In this case
$P\gamma=c_+$ and
$$\mu_{CZ}(c_-)-\mu_{CZ}(P\gamma)=2, \quad
\mu_{CZ}(P\gamma)-\mu_{CZ}(c_+)=0.$$ Hence
$$\varepsilon=1, \quad \mathfrak{w}(\gamma)=0.$$
Since $P\gamma=c_+$ and $\mathfrak{w}(\gamma)=0$ we conclude that
$\gamma \in S^1 \ell(c_+)$. Since the action of a nonconstant
gradient flow line is strictly decreasing we conclude that $w^2$
itself is constant, which contradicts our assumption. Hence Case~3
never occurs.
\\ \\Summarizing we have shown that the only broken flow lines
$[w^1]\#[w^2]$ meeting the asymptotic condition (\ref{as2}) are the
one's from Case~2, i.e.~$Pw^1$ is constant and
$\mathfrak{w}(\gamma)=1$. We conclude that $w^1$ is a vortex and
$w^2$ is an Oni. Since the vortex number is one by
Theorem~\ref{vonu} we deduce that the modulo two number of broken
gradient flow lines subject to condition (\ref{as2}) coincides with
the number of Onis. This finishes the proof of the Theorem. \hfill
$\square$

\subsection{Proof of Proposition~\ref{generic}}

To establish that for generic $B \in \mathfrak{B}(J)$ regularity
conditions (i) and (ii) are met, we first observe that since $J \in
\mathcal{J}^\nu_{\mathrm{reg}}$ the linearization of the gradient
flow equation (\ref{gr2}) along a finite energy gradient flow line
$w$ is already surjective in the horizontal directions. To show that
it is also surjective in the vertical directions we distinguish two
cases. In the first $Pw$ is nonconstant, i.e. $Pw$ is a finite
energy solution of Floer's gradient flow equation. We claim that $Pw$ necessarily leaves the neighborhood $\mathcal{U}$. We recall that $\mathcal{U}$ is the union of disjoint neighborhoods of the periodic orbits of $X_H$ where each such neighborhood contracts onto the periodic orbit, see the discussion before equation \eqref{eqn:space_of_Bs}. If $Pw$ is contained in $\mathcal{U}$ then it has to be a gradient trajectory connecting the same periodic orbit with cappings $d$ and $d\#Pw$. Since $Pw$ is contained in $\mathcal{U}$ the two cappings are homotopic to each other and thus $Pw$ is constant.

By \cite[Theorem
4.3]{floer-hofer-salamon} the set of regular points for $Pw$ is open
and dense. In view of Lemma~\ref{transv} below a standard argument,
see for instance \cite[Section 5]{floer-hofer-salamon} or
\cite[Chapter 3]{mcduff-salamon1} establishes that for generic $B
\in \mathfrak{B}(J)$ the linearization of the gradient flow equation
is also vertically surjective. In the second case $Pw$ is constant.
But then $w$ is a vortex and vortices are by
Proposition~\ref{transverse} always transverse independent of the
perturbation $B \in \mathfrak{B}(J)$. This shows that generically
(i) and (ii) hold true.

The rest of the proof is devoted to show that generically the
regularity condition (iii), i.e. the vanishing of
$J_t^B$-holomorphic spheres, is satisfied as well. Assume that $u
\colon S^2 \to E$ is a $J_t^B$-holomorphic sphere for some $t \in
S^1$. Its shadow $v=p \circ u \colon S^2 \to M$ is a
$J_t$-holomorphic sphere. Hence if $\beta \in \Omega^2(M)$ is a
closed two-form representing the first Chern class of $TM$ we obtain
in local holomorphic coordinates of $S^2$ the inequality
$$|\beta(\partial_x v,\partial_y v)|=|\beta(\partial_x v,J_t(v)
\partial_x v)| \leq \kappa_\beta(J_t)||\partial_x v||^2_{J_t}
=\kappa_\beta(J_t)\omega(\partial_x v, \partial_y v)$$ and therefore
after integration
$$|\langle c_1(v^*TM),[S^2]\rangle| \leq \kappa_\beta(J_t)\omega([v]).$$
Since $\beta$ was an arbitrary representative of the first Chern
class we get by definition of the Auroux constant
$$|\langle c_1(v^*TM),[S^2]\rangle| \leq \kappa(J_t)\omega([v]).$$
Therefore we conclude
\begin{eqnarray}\label{auroux}
\langle c_1(u^*TE),[S^2]\rangle&=&\langle c_1(v^*TE),[S^2]\rangle\\
\nonumber &=&\langle c_1(v^*TM),[S^2]\rangle+\langle
c_1(v^*E),[S^2]\rangle\\ \nonumber
&\leq&(\kappa(J_t)-\nu)\omega([v])\\ \nonumber
&<&\min\big\{2-n,0\big\}\omega([v])
\end{eqnarray}
where for the last inequality we used that
$\nu>\max\{n+\kappa(J_t)-2,\kappa(J_t)\}$ since $J \in
\mathcal{J}^\nu$.
 The virtual dimension
$\mathrm{virdim}_u\big(\mathfrak{H}(J^B_t)\big)$ of the moduli space
$\mathfrak{H}(J^B_t)$ of $J^B_t$-holomorphic spheres at $u$ is given
by the Riemann Roch formula
$$
\mathrm{virdim}_u\big(\mathfrak{H}(J^B_t)\big)=
2\mathrm{dim}(E)+2\langle c_1(u^*TE),[S^2]\rangle-6$$ and hence can
be estimated using (\ref{auroux})
\begin{equation}\label{virdim1}
\mathrm{virdim}_u\big(\mathfrak{H}(J^B_t)\big)<
2n-4-2\max\big\{n-2,0\big\} \omega([v]).
\end{equation}
Since $u$ is a nonconstant $J^B_t$-holomorphic curve it holds that
$$\omega([v])=\omega_E([u])>0.$$
Since $\omega$ is integral, it follows that
\begin{equation}\label{integral}
\omega([v]) \geq 1.
\end{equation}
Inequalities (\ref{virdim1}) and (\ref{integral}) imply that
\begin{equation}\label{vird}
\mathrm{virdim}_u\big(\mathfrak{H}(J^B_t)\big)<
2n-4-2\max\big\{n-2,0\big\} \leq 0.
\end{equation}
Since the virtual dimension is an even integer we immediately obtain
from (\ref{vird}) the stronger estimate
\begin{equation}\label{virdim2}
\mathrm{virdim}_u\big(\mathfrak{H}(J_t^B)\big) \leq -2.
\end{equation}
Now assume that $u$ is simple in the sense of \cite[Chapter
2.5]{mcduff-salamon1}. It follows from Lemma~\ref{simple} below,
that $u$ is horizontally injective on a dense set, in particular, there are horizontally injective points on $E\setminus{p^{-1}(\mathcal{U})}$. Therefore the usual
transversality arguments as explained in \cite[Chapter
3.2]{mcduff-salamon1} imply that for generic choice of the
perturbation $B$ the moduli space of simple $J_t^B$ is a manifold
whose dimension equals its virtual dimension. Therefore by
(\ref{virdim2}) generically there are no simple $J^B_t$-holomorphic
curves. Hence there are no nonconstant $J^B_t$-holomorphic curves at
all, since every nonconstant $J^B_t$-holomorphic curve has an
underlying simple curve. This finishes the proof of the Proposition.
\hfill $\square$
\\ \\
It remains to show two lemmas which were used in the proof above.

\begin{lemma}\label{transv}
Assume $e \in E$, $h_0 \in \HH_e$, $v_0 \in \VV_e$ such that $h_0 \neq
0$,  and $t_0 \in S^1$. Then there exists $B \in \mathfrak{B}(J)$
such that $B_{t_0} h_0=v_0$.
\end{lemma}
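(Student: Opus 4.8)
The plan is to reduce the lemma to a pointwise statement in linear algebra and then to promote the pointwise solution to an element of $\mathfrak{B}(J)$ by a cut-off argument. Observe first that any $B\in\mathfrak B(J)$ vanishes on the open set $p^{-1}(\mathcal U)$, hence on its closure, so the assertion can only hold when $p(e)\notin\overline{\mathcal U}$; this is the case in every application, and we assume it. Recall also that under the canonical identifications $\HH_e=T_{p(e)}M$, $\VV_e=E_{p(e)}$, and since the complex structures $I$ on $\VV$ and $J_t$ on $\HH$ depend only on the base point, the restriction of $\mathrm{End}(\HH,\VV)$ to a fibre $E_x$ is the trivial bundle with fibre $\mathrm{End}(T_xM,E_x)$; hence over $p^{-1}(W)$, for a contractible $W\subset M$ trivializing $TM$ and $E$, sections of $\mathrm{End}(\HH,\VV)$ are just matrix-valued functions on $W\times\mathbb C\times S^1$.

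First I would solve the problem pointwise. Write $j=J_{t_0}(e)$ on $\HH_e$ and $i=I(e)$ on $\VV_e$. As $h_0\neq 0$, the vectors $h_0$ and $jh_0$ span a $j$-complex line; choosing a $j$-complex complement $K$, define $\phi_0\in\mathrm{End}(\HH_e,\VV_e)$ by $\phi_0h_0=v_0$, $\phi_0(jh_0)=-iv_0$, and $\phi_0|_K=0$. A one-line computation gives $\phi_0 j=-i\phi_0$, i.e.\ $\phi_0$ is complex anti-linear, while $\phi_0h_0=v_0$ holds by construction.

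Next I would globalize. Choose a contractible $W\subset M$ around $p(e)$ with $\overline W$ compact and $\overline W\cap\overline{\mathcal U}=\emptyset$, over which $TM$ and $E$ are trivialized (the latter by a hermitian frame), identifying $p^{-1}(W)\cong W\times\mathbb C$. Let $[\phi_0]$ be the constant matrix representing $\phi_0$ in these frames and pick bump functions $\chi\in C_0^\infty(W)$, $\widetilde\chi\in C_0^\infty(\mathbb C)$, $\psi\in C^\infty(S^1)$ with $\chi(p(e))=\widetilde\chi(z_e)=\psi(t_0)=1$, where $z_e$ is the fibre coordinate of $e$. Setting $\widehat B(y,z,t)=\chi(y)\widetilde\chi(z)\psi(t)[\phi_0]$ and extending by zero yields $\widehat B\in\Gamma_0(E\times S^1,\mathrm{End}(\HH,\VV))$ which is compactly supported, vanishes on $p^{-1}(\mathcal U)$, and satisfies $\widehat B_{t_0}(e)=\phi_0$, but which need not obey $\widehat B_tJ_t=-I\widehat B_t$. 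To repair this I would replace $\widehat B$ by its fibrewise $(J_t,I)$-anti-linear part $$B_t=\tfrac12\big(\widehat B_t+I\,\widehat B_t\,J_t\big),$$ which satisfies $B_tJ_t=-IB_t$ since $I^2=J_t^2=-\mathrm{id}$. As $\widehat B_t\mapsto B_t$ is fibrewise linear, $B$ is still smooth and compactly supported with $\mathrm{supp}(B)\subseteq\mathrm{supp}(\widehat B)$, which is disjoint from $p^{-1}(\mathcal U)$ because $\overline W\cap\overline{\mathcal U}=\emptyset$; thus $B\in\mathfrak B(J)$. Finally $\phi_0$ is already $(j,i)$-anti-linear, so $I(e)\phi_0J_{t_0}(e)=-I(e)^2\phi_0=\phi_0$ and therefore $B_{t_0}(e)=\tfrac12(\phi_0+\phi_0)=\phi_0$, giving $B_{t_0}h_0=v_0$.

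There is no genuine obstacle: this is a routine transversality-type construction. The only points requiring a little care are choosing $W$ disjoint from $\overline{\mathcal U}$, which is exactly where the tacit hypothesis $p(e)\notin\overline{\mathcal U}$ is used, and checking that passing to the anti-linear part neither enlarges the support nor spoils the prescribed value at $(e,t_0)$ — the latter being automatic because $\phi_0$ was chosen anti-linear from the outset.
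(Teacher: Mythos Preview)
Your proof is correct and follows the same overall strategy as the paper: solve the anti-linearity constraint pointwise at $(e,t_0)$ and then extend by bump functions. The implementation differs slightly: the paper extends the local sections $h,v$ and defines $B_t$ directly by $B_th=\beta v$, $B_tJ_th=-\beta Iv$, $B_t|_{\langle h,J_th\rangle^{\perp_t}}=0$, so that $B_tJ_t=-IB_t$ holds by construction for every $t$; you instead take a constant matrix in a trivialization, cut it off, and then pass to the $(J_t,I)$-anti-linear part $B_t=\tfrac12(\widehat B_t+I\widehat B_tJ_t)$. Your projection trick is a clean way to decouple the ``hit the value'' step from the ``enforce anti-linearity'' step, and your observation that anti-linearization preserves the value at $(e,t_0)$ because $\phi_0$ was already anti-linear is exactly right. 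You are also more careful than the paper in making explicit the tacit hypothesis $p(e)\notin\overline{\mathcal U}$, without which the conclusion is vacuously false for $v_0\neq0$.
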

\textbf{Proof: } By local triviality there exists a neighborhood $U$
of $e$ and sections $h \in \Gamma(U,\HH|_{U})$ and $v \in
\Gamma(U,\VV|_{U})$ with the property that $h$ is not vanishing and
$$h(e)=h_0, \quad v(e)=v_0.$$
Choose further a compactly supported function $\beta \in
C^\infty(U,[0,1])$ satisfying $\beta(e)=1$. Since $J_t$ is
$\omega$-compatible the orthogonal complement $\langle h, Jh
\rangle^{\perp_t} \subset \HH$ with respect to the metric $\omega(
\cdot, J_t \cdot)$ is invariant under $J_t$. We define $B \in
\Gamma_0\big(E,\mathrm{End}(\HH,\VV)\big)$ as the section which vanishes
outside $U$ and on $U$ is determined by
$$B_th=\beta v, \quad B_tJ_th=-\beta Iv, \quad B|_{\langle h, J_th \rangle
^{\perp_t}}=0.$$ By construction we have $B_tJ_t=-IB_t$ and
$B_{t_0}h_0=v_0$. This finishes the proof of the Lemma. \hfill
$\square$
\\ \\
To state the second lemma we first need a definition. For $u \in
C^\infty(S^2,E)$ we denote for $z \in S^2$ by
$$d^h u(z) \colon T_z S^2 \to \\\HH_{u(z)}$$
the composition of $du(z)$ with the projection from $T_{u(z)}E$ to
$\HH_{u(z)}$ along $\VV_{u(z)}$.
\begin{fed}
A $J^B$-holomorphic sphere $u \colon S^2 \to E$ is called
\emph{somewhere horizontally injective} if there exists $z \in S^2$
such that
$$d^hu(z) \neq 0, \quad u^{-1}(u(z))=\{z\}.$$
\end{fed}
\begin{lemma}\label{simple}
Assume that $u \colon S^2 \to E$ is a simple $J^B$-holomorphic
curve. Then $u$ is horizontally injective on a dense set.
\end{lemma}
\textbf{Proof: }Denote by $I \subset S^2$ the subset of injective
points of $S^2$, by $S \subset S^2$ the subset of horizontally
injective points and by $R(p(u)) \subset S^2$ the subset of
nonsingular points of $p(u) \colon S^2 \to M$. Then
\begin{equation}\label{hi}
S=I \cap R(p(u)).
\end{equation}
We first observe that $p(u)$ is not constant, since otherwise $u$
would lie in one fibre and hence itself must be constant,
contradicting the assumption that it is simple. Hence it follows
from \cite[Lemma 2.4.1]{mcduff-salamon1} that the complement of
$R(p(u))$ is finite. Moreover, it follows from \cite[Proposition
2.5.1]{mcduff-salamon1} that the complement of $I$ is countable.
Hence by (\ref{hi}) the complement of $S$ is countable. In
particular, $S$ is dense. \hfill $\square$

\subsection{Proof of Theorem~\ref{finite}}

If $w$ is a flow line of $\nabla_B \mathcal{A}^\mu_H$, then $Pw$ is
a flow line of $\nabla_J \mathcal{A}^H$. Hence both functional
$\mathcal{A}^\mu_H$ and $\mathcal{A}_H \circ P$ are decreasing along
$w$. Therefore using that the perturbation is regular
Theorem~\ref{finite}  follows from the usual breaking arguments in
Morse homology, see \cite{schwarz}, as soon as the following Theorem
is established.

\begin{thm}\label{compact}
Assume that $J \in \mathcal{J}^\nu$ and $B \in
\mathfrak{B}_{\mathrm{reg}}^T(J)$. Suppose further that
$w_\nu=([u_\nu,\bar{u}_\nu],\eta_\nu)$ for $\nu \in \mathbb{N}$ is a
sequence of flow lines of $\nabla_B \mathcal{A}^\mu_H$ for which
there exists $a<b$ with the property that
$$a \leq \mathcal{A}^\mu_H(w_\nu)(s) \leq b,\quad
a \leq \mathcal{A}_H(P w_\nu)(s) \leq b, \quad \nu \in
\mathbb{N},\,\,s \in \mathbb{R}.$$ Then there exists a subsequence
$\nu_j$ and a flow line $w$ of $\nabla_B \mathcal{A}^\mu_H$ such
that $w_{\nu_j}$ converges in the $C^\infty_{\mathrm{loc}}$-topology
to $w$.
\end{thm}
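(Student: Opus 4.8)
The plan is to derive $C^0$- and then $C^\infty_{\mathrm{loc}}$-bounds for the sequence $w_{\nu}=([u_\nu,\bar u_\nu],\eta_\nu)$ from the two action bounds and the tameness of $J^B$, and then invoke elliptic bootstrapping together with the standard Floer--Gromov compactness machinery. First I would extract a uniform energy bound: since $\mathcal{A}^\mu_H$ is decreasing along flow lines and is pinched between $a$ and $b$, the energy $\int_{\mathbb R}\|\partial_s w_\nu\|^2_{\mathfrak m_B}\,ds = \mathcal{A}^\mu_H(w_\nu)(-\infty)-\mathcal{A}^\mu_H(w_\nu)(+\infty)\le b-a$ is uniformly bounded; here one uses that $\mathfrak m_B$ is taming so $\|\cdot\|_{\mathfrak m_B}$ controls the genuine $L^2$-norm. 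Likewise the action bound on $Pw_\nu$ gives a uniform energy bound for the Floer curves $Pw_\nu$ on $M$ in the sense of $\int |\partial_s (Pu_\nu)|^2$.

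Next I would establish a uniform $C^0$-bound, which is the heart of the matter. The shadow $Pw_\nu$ is a finite-energy solution of Floer's equation on the \emph{closed} manifold $M$, so it automatically stays in a fixed compact set; the only possible escape to infinity is in the fibre direction, i.e.\ $|u_\nu|$ could blow up. To rule this out, note that $\mu(u)=\pi(|u|^2-1)$ and that the $\eta$-component of the flow equation reads $\partial_s\eta_\nu=\int_0^1\mu(u_\nu)\,dt$; combined with the action identity and the uniform bound on $\mathcal{A}^\mu_H(w_\nu)$ one gets an $L^1$- (in fact $L^2$-)bound on $\int_0^1\mu(u_\nu(s,\cdot))\,dt$ and hence a uniform bound on $\eta_\nu(s)$ for $s$ in compact sets. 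With $\eta_\nu$ controlled, the vertical part of equation~(\ref{gr2}) is a $\bar\partial$-type equation for the fibre coordinate with coefficients depending on the (already bounded) horizontal data, and a maximum-principle/Moser-iteration argument applied to $|u_\nu|^2$ yields the desired $C^0_{\mathrm{loc}}$-bound on $u_\nu$.

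Once $w_\nu$ is $C^0_{\mathrm{loc}}$-bounded with bounded energy, I would invoke the standard no-bubbling dichotomy: if the derivatives $\|dw_\nu\|$ were unbounded on some compact set, a rescaling argument would produce a nonconstant $J^B_t$-holomorphic sphere in $E$ (for some $t\in S^1$); but $B\in\mathfrak{B}^T_{\mathrm{reg}}(J)$ lies in $\mathfrak{B}_{\mathrm{reg}}(J)$, so by condition~(iii) no such sphere exists. (One must check the usual removable-singularity and energy-quantization steps go through for the tame, non-compatible $J^B$, using that $J^B$ is $\omega_E$-tame so that energy is still a topological quantity up to a positive constant.) Hence $\|dw_\nu\|$ is uniformly bounded on compact sets, and then elliptic bootstrapping for the (perturbed, nonlinear Cauchy--Riemann plus ODE) system~(\ref{gr1}) gives uniform $C^k_{\mathrm{loc}}$-bounds for all $k$; Arzel\`a--Ascoli extracts a $C^\infty_{\mathrm{loc}}$-convergent subsequence whose limit $w$ solves the flow equation.

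The main obstacle is the $C^0$-control in the fibre direction, i.e.\ simultaneously bounding $\eta_\nu$ and $|u_\nu|$: unlike in Floer homology on a closed manifold, the target $E$ is noncompact, so one genuinely needs the Rabinowitz structure (the coupling between $\eta$ and the moment map $\mu$) together with the action bounds to confine the curves. Everything after that—excluding sphere bubbles via condition~(iii) and bootstrapping—is the routine Floer--Gromov argument, adapted in the by-now-standard way to an $\omega_E$-tame rather than compatible almost complex structure.
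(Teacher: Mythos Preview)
Your overall architecture matches the paper's: reduce to (i) a uniform bound on $\eta_\nu$, (ii) a uniform $C^0$-bound on $u_\nu$ via convexity/maximum principle, and (iii) exclusion of bubbling via condition~(iii) in the definition of $\mathfrak{B}_{\mathrm{reg}}(J)$, followed by bootstrapping. Steps (ii) and (iii) are handled essentially as the paper does (the paper phrases (ii) as ``$(E,J^B_t)$ is convex at infinity since $B$ has compact support, plus a Laplace estimate'').

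The gap is in your argument for (i). From the energy bound you get $\|\partial_s\eta_\nu\|_{L^2(\mathbb{R})}\le C$, but an $L^2$-bound on $\partial_s\eta_\nu$ does \emph{not} yield a pointwise bound on $\eta_\nu$, even on compact $s$-intervals: you have no anchor value, since $\eta_\nu(0)$ could drift with $\nu$. Your claimed $L^1$-bound on $\partial_s\eta_\nu$ over all of $\mathbb{R}$ does not follow from the energy either. Moreover, a bound on $\eta_\nu$ only ``for $s$ in compact sets'' is not enough for the subsequent maximum-principle step, which needs the coefficients of the vertical equation controlled uniformly in $s$ to confine $|u_\nu|$ globally. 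The paper supplies the missing ingredient: at a critical point one has the identity $\eta=\mathcal{A}^\mu_H(w)-\mathcal{A}_H(Pw)$, so the \emph{second} action bound $a\le \mathcal{A}_H(Pw_\nu)\le b$ (which you invoke only to bound the horizontal energy) together with the first pins down the asymptotic values $\eta_\nu(\pm\infty)$ uniformly; one then runs the Cieliebak--Frauenfelder argument with the functional $\mathcal{A}^\mu_H-\mathcal{A}_H\circ P$ in place of $\mathcal{A}^\mu_H$ to propagate this to a uniform bound on $\eta_\nu(s)$ for all $s$. Once you insert this, your outline is complete and coincides with the paper's proof.
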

\textbf{Proof: } The proof of Theorem~\ref{compact} follows along
standard lines, see \cite[Chapter 4]{mcduff-salamon1} if the
following three conditions for $w_\nu$ can be established.
\begin{description}
 \item[(i)] A uniform $C^0$-bound on the Lagrange multipliers $\eta_\nu$.
 \item[(ii)] A uniform $C^0$-bound for the loops $u_\nu$.
 \item[(iii)] A uniform bound on the derivatives of the loops
 $u_\nu$.
\end{description}
Condition (i) is the content of Proposition~\ref{lagbound},
condition (ii) is the content of Proposition~\ref{loopbound}, and
condition (iii) follows because there is no bubbling, since the
perturbation $B$ is regular. This proves the theorem. \hfill
$\square$

\begin{prop}\label{lagbound}
Under the assumptions of Theorem~\ref{compact} there exists a
constant $c=c(a,b)$ such that $|\eta_\nu(s)| \leq c$ for every $\nu
\in \mathbb{N}$ and every $s \in \mathbb{R}$.
\end{prop}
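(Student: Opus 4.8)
The plan is to carry out, in the present geometry, the scheme standard in Rabinowitz Floer homology: an $L^2$-energy bound from the action bounds, a ``fundamental lemma'' bounding $|\eta|$ wherever the pseudo-gradient is small, and an interpolation between such points. First, along a flow line $w=w_\nu$ of $\nabla_B\mathcal{A}^\mu_H$ the action is monotone, $\frac{d}{ds}\mathcal{A}^\mu_H(w(s))=-\mathfrak{m}_B\big(\nabla_B\mathcal{A}^\mu_H(w(s)),\nabla_B\mathcal{A}^\mu_H(w(s))\big)\le 0$, by the positivity of $\mathfrak{m}_B(\nabla_B\mathcal{A}^\mu_H,\nabla_B\mathcal{A}^\mu_H)$ noted before (\ref{gr1}). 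Hence $a\le\mathcal{A}^\mu_H(w_\nu)(s)\le b$ yields the uniform bound $\int_{-\infty}^{\infty}\mathfrak{m}_B(\partial_s w_\nu,\partial_s w_\nu)\,ds\le b-a=:\mathcal{E}$. Writing $\|\cdot\|$ for $\mathfrak{m}_B(\cdot,\cdot)^{1/2}$, a norm since $J^B$ is $\omega_E$-tame, this reads $\int_{-\infty}^{\infty}\|\nabla_B\mathcal{A}^\mu_H(w_\nu(s))\|^2\,ds\le\mathcal{E}$, and since the $\mathbb{R}$-component of $\mathfrak{m}_B$ is $|\hat\eta|^2$ we also get $|\partial_s\eta_\nu(s)|=\big|\int_0^1\mu(u_\nu(s,t))\,dt\big|\le\|\nabla_B\mathcal{A}^\mu_H(w_\nu(s))\|$.

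The heart of the matter, and the step I expect to be hardest, is the fundamental lemma: there are $\epsilon>0$ and $C_0>0$ such that $\|\nabla_B\mathcal{A}^\mu_H(w)\|<\epsilon$ implies $|\eta|\le C_0\big(|\mathcal{A}^\mu_H(w)|+|\mathcal{A}_H(Pw)|+1\big)$ for every $w=([u,\bar u],\eta)\in\widetilde{\mathscr{L}}_E\times\mathbb{R}$. The first step is to confine $u$. The hypothesis gives $\big|\int_0^1\mu(u)\,dt\big|<\epsilon$, i.e.\ $\int_0^1|u(t)|^2\,dt\in\big(1-\tfrac{\epsilon}{\pi},1+\tfrac{\epsilon}{\pi}\big)$, and $\|e\|_{L^2(S^1)}\le C\epsilon$ where $e:=\partial_t u-X_{\widehat{H}}(u)-\eta R(u)$, again by $\omega_E$-tameness of $J^B$. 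Since $\widehat{H}_t=\nu\pi|u|^2(H_t\circ p)$ is invariant under the fibre rotation, it Poisson-commutes with the moment map $\mu$, which forces $d|u|^2(X_{\widehat{H}_t})=0$; as also $d|u|^2(R)=0$, the vertical part $\partial^v_t u=X^v_{\widehat{H}}(u)+\eta R(u)+e^v$ of the flow equation, combined with $\partial_t|u|^2=2\langle\partial^v_t u,u\rangle$ (the connection being hermitian), gives $\partial_t|u|^2=2\langle e^v,u\rangle$, whence $\big|\partial_t|u|\big|\le|e|$ a.e., so $\big||u(t)|-|u(t')|\big|\le\|e\|_{L^2(S^1)}\le C\epsilon$ for all $t,t'$. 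Together with $\int_0^1|u|^2\,dt\in\big(1-\tfrac{\epsilon}{\pi},1+\tfrac{\epsilon}{\pi}\big)$ this forces $\tfrac12\le|u(t)|\le 2$ for all $t$ once $\epsilon$ is small, so $u$ takes values in the fixed compact set $K=\{\tfrac12\le|u|\le 2\}$.

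The second step extracts $\eta$. Stokes applied to $\omega_E=d(\pi|u|^2\alpha)+p^*\omega$ gives
$$\mathcal{A}^\mu_H(w)=-\int_0^1\pi|u|^2\alpha(\partial_t u)\,dt-\int_D(p\circ\bar u)^*\omega-\int_0^1\widehat{H}_t(u)\,dt-\eta\int_0^1\mu(u)\,dt,$$
and substituting $\partial_t u=X_{\widehat{H}}(u)+\eta R(u)+e$ in the first integral, using $\alpha(R)\equiv-1$ together with $\int_0^1\pi|u|^2\,dt=\pi+O(\epsilon)$, isolates the term $\pi\eta$ up to an error $O(\epsilon|\eta|)$. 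The leftover integrals $\int_0^1\pi|u|^2\alpha(X_{\widehat{H}}(u))\,dt$, $\int_0^1\widehat{H}_t(u)\,dt$ and $\int_0^1\pi|u|^2\alpha(e)\,dt$ are bounded on $K$ (the last one by $C'\|e\|_{L^2}\le C''\epsilon$), and $\int_D(p\circ\bar u)^*\omega=-\mathcal{A}_H(Pw)-\int_0^1H_t(p(u))\,dt$ is bounded by $|\mathcal{A}_H(Pw)|+\|H\|_{C^0}$. Hence $|\pi\eta|\le|\mathcal{A}^\mu_H(w)|+|\mathcal{A}_H(Pw)|+C'''+c\epsilon|\eta|$, and moving $c\epsilon|\eta|$ to the left for $\epsilon$ small yields the fundamental lemma.

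Finally I would combine the two. Fix $\nu$ and $s_0$; under the hypotheses of Theorem~\ref{compact}, both $\mathcal{A}^\mu_H(w_\nu)(s)$ and $\mathcal{A}_H(Pw_\nu)(s)$ lie in $[a,b]$ for every $s$. If $\|\nabla_B\mathcal{A}^\mu_H(w_\nu(s_0))\|<\epsilon$, the fundamental lemma gives $|\eta_\nu(s_0)|\le C_0(2\max\{|a|,|b|\}+1)=:C_1$. Otherwise the energy bound forces $\{s:\|\nabla_B\mathcal{A}^\mu_H(w_\nu(s))\|\ge\epsilon\}$ to have Lebesgue measure $\le\mathcal{E}/\epsilon^2$, so some $s_1$ with $|s_1-s_0|\le\mathcal{E}/\epsilon^2+1$ satisfies $\|\nabla_B\mathcal{A}^\mu_H(w_\nu(s_1))\|<\epsilon$; then $|\eta_\nu(s_1)|\le C_1$ and
$$|\eta_\nu(s_0)-\eta_\nu(s_1)|\le\int_{s_0}^{s_1}|\partial_s\eta_\nu|\,ds\le|s_1-s_0|^{1/2}\,\mathcal{E}^{1/2}\le\big((\mathcal{E}/\epsilon^2+1)\,\mathcal{E}\big)^{1/2}.$$
Either way $|\eta_\nu(s_0)|\le C_1+\big((\mathcal{E}/\epsilon^2+1)\,\mathcal{E}\big)^{1/2}$, a bound depending only on $a$ and $b$. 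The genuinely delicate point is the confinement of $u$ in the first step --- $|u|$ being a priori unbounded on the non-compact total space $E$ --- and, should that direct argument prove awkward, a Cieliebak--Frauenfelder-type rescaling-and-contradiction argument is the standard alternative.
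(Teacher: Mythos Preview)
Your proposal is correct and follows the same Cieliebak--Frauenfelder scheme the paper invokes: an energy bound, a fundamental lemma bounding $|\eta|$ where the gradient is small, and interpolation. The paper condenses your fundamental lemma into the exact identity $\eta=\mathcal{A}^\mu_H(w)-\mathcal{A}_H(Pw)$ at critical points and then refers to \cite{cieliebak-frauenfelder} applied to the difference $\mathcal{A}^\mu_H-\mathcal{A}_H\circ P$ for the remaining steps; your Stokes computation in the second step is precisely what verifies this identity (and its approximate version when the gradient is merely small), so the two arguments are the same in substance.
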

\textbf{Proof: } If $w=([u,\bar{u}],\eta)$ is a critical point of
$\mathcal{A}^\mu_H$, then a computation shows that
$$\eta=\mathcal{A}^\mu_H(w)-\mathcal{A}_H(Pw).$$
By assumption $\mathcal{A}^\mu_H-\mathcal{A}_H \circ P$ is uniformly
bounded along the gradient flow lines $w_\nu$. By applying the
arguments from \cite{cieliebak-frauenfelder} to $\mathcal{A}^\mu_H-
\mathcal{A}_H \circ P$ instead of $\mathcal{A}^\mu_H$ the Lagrange
multipliers $\eta_\nu$ can be bounded in terms of
$\mathcal{A}^\mu_H-\mathcal{A}_H \circ P$ which gives a uniform
bound for them. A similar argument was used in
\cite{cieliebak-frauenfelder-paternain}. For complete details we
refer to \cite{frauenfelder1}. \hfill $\square$

\begin{prop}\label{loopbound}
Under the assumptions of Theorem~\ref{compact} there exists a
compact subset $K=K(a,b) \subset E$ such that $u_\nu(s,t) \in K$ for
every $\nu \in \mathbb{N}$ and every $(s,t) \in \mathbb{R}\times
S^1$.
\end{prop}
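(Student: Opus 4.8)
The plan is to establish the $C^0$-bound on the loops $u_\nu$ by controlling the fiberwise radial coordinate $|u_\nu|$, since the base component $p(u_\nu)$ automatically lies in the compact manifold $M$. The key quantity is the moment map $\mu(u)=\pi(|u|^2-1)$, so it suffices to bound $\int_0^1\mu(u_\nu(s,t))\,dt$ uniformly in $s$ and then upgrade this $L^1$-in-$t$ bound to a pointwise bound. First I would exploit the flow equation: the second component of \eqref{gr1} reads $\partial_s\eta_\nu-\int_0^1\mu(u_\nu)\,dt=0$, so $\int_0^1\mu(u_\nu(s,\cdot))\,dt=\partial_s\eta_\nu(s)$. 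Combining this with the uniform $C^0$-bound on $\eta_\nu$ from Proposition~\ref{lagbound} and the fact that the action $\mathcal{A}^\mu_H$ is monotone and bounded along $w_\nu$ (so $\int_{-\infty}^\infty\|\partial_s w_\nu\|^2\,ds$ is uniformly bounded), one obtains that $\partial_s\eta_\nu$ — and hence $\int_0^1\mu(u_\nu(s,\cdot))\,dt$ — is uniformly $L^2$ in $s$; together with a uniform bound on its $s$-derivative coming from the equations this yields a uniform pointwise bound on $\int_0^1\mu(u_\nu(s,\cdot))\,dt$.

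Next I would promote this averaged bound to a genuine $C^0$-bound on $|u_\nu|$. The standard device here, following \cite{cieliebak-frauenfelder}, is to note that the function $f_\nu(s,t)=\tfrac12|u_\nu(s,t)|^2$, or rather $\mu(u_\nu)$ itself, satisfies a differential inequality of the form $\Delta(\mu\circ u_\nu)\ge -c(1+\mu\circ u_\nu)$ on the region where $|u_\nu|$ is large: this comes from differentiating $\mu$ along the PDE \eqref{gr2}, using that the vertical part of the equation is a perturbed Cauchy--Riemann equation with the $S^1$-action generator $R$, and that $\widehat H_t(u)=\nu\pi|u|^2H_t(p(u))$ grows only quadratically while $\mu$ grows quadratically as well — so outside a compact set the reaction terms have a favorable sign or are controlled linearly by $\mu$. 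A mean-value inequality (subharmonicity-type estimate, cf.\ \cite[Chapter 4]{mcduff-salamon1}) then bounds $\sup_{t}\mu(u_\nu(s,\cdot))$ in terms of $\int_0^1\mu(u_\nu(s,\cdot))\,dt$ plus a constant, and by the previous step this is uniform in $s$ and $\nu$. Since $\mu\ge-\pi$ always and $p(u_\nu)\in M$, this confines all $u_\nu(s,t)$ to a fixed compact set $K=K(a,b)\subset E$.

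The main obstacle I anticipate is the derivation of the correct differential inequality for $\mu\circ u_\nu$ from the split equations \eqref{gr2}: one must carefully account for the perturbation term $B(\partial_t^h u-X_H(p(u)))$ appearing in the vertical equation, check that it does not destroy the sign of the dominant term, and verify that the Hamiltonian term $(\nu H(p(u))+\eta)R(u)$ contributes only a bounded multiple of the gradient (here the uniform bound on $\eta_\nu$ is essential, and so is the $C^0$-bound on $H$). A secondary subtlety is that $\mathfrak{m}_B$ is not symmetric, so the usual identity relating the energy to the action difference must be replaced by the inequality $d\mathcal{A}^\mu_H(w)\nabla_B\mathcal{A}^\mu_H(w)>0$ recorded in the text; this still gives the needed $L^2$-energy bound because $\mathfrak{m}_B$ is positive for small $B$. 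Once the differential inequality is in hand, the mean-value estimate and the passage to a compact set are routine. For the complete computational details I would refer, as in Proposition~\ref{lagbound}, to \cite{cieliebak-frauenfelder} and \cite{frauenfelder1}. \hfill$\square$
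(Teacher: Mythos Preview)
Your second step --- a Laplace-type differential inequality for the radial function followed by a maximum/mean-value argument --- is indeed the paper's mechanism, but you are missing the one observation that makes it work cleanly: by definition $B\in\Gamma_0\big(E\times S^1,\mathrm{End}(\HH,\VV)\big)$ has \emph{compact support}, so outside a fixed compact subset of $E$ one has $J^B_t=J^0_t$ and $(E,J^B_t)$ is convex at infinity. This dissolves what you call your ``main obstacle'': the term $B(\partial_t^h u-X_H(p(u)))$ simply vanishes where $|u|$ is large, and the Laplace estimate of \cite{cieliebak-frauenfelder-oancea} (subharmonicity of the radial coordinate plus the maximum principle) then applies directly and yields the $C^0$-bound with no prior integral input.

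Your first step, on the other hand, has a genuine gap. From $\partial_s\eta_\nu=\int_0^1\mu(u_\nu)\,dt$, the $C^0$-bound on $\eta_\nu$, and the energy bound you obtain only that $\partial_s\eta_\nu$ is uniformly in $L^2$; to promote this to a pointwise bound you invoke ``a uniform bound on its $s$-derivative'', i.e.\ on $\partial_s^2\eta_\nu=\int_0^1\partial_s\mu(u_\nu)\,dt$. But this quantity involves $|u_\nu|$ itself, which is precisely what you are trying to control, so the argument is circular. Fortunately the maximum-principle route above makes the averaged bound unnecessary, and the proof collapses to the short sketch given in the paper.
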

\textbf{Proof: }Since the perturbation $B$ is compactly supported
$(E,J^B_t)$ is convex at infinity. A Laplace estimate for the
gradient flow equation of Rabinowitz action functional then implies
that $u_\nu$ stay in a bounded set of $E$. This Laplace estimate was
also used in \cite{cieliebak-frauenfelder-oancea}. We refer to
\cite{cieliebak-frauenfelder-oancea} or \cite{frauenfelder1} for
complete details. \hfill $\square$

\section{Homotopies of homotopies}

In the following we assume that
$\nu>\max\{n+\kappa(\omega)-1,\kappa(\omega)\}$ which allows us to
conclude that for generic homotopies of homotopies there are still
no holomorphic spheres. Instead of studying a fixed Hamiltonian $H$,
we consider now the continuation from a $C^2$-small Morse function
$H_0$ to the Hamiltonian $H$. We can use the gradient flow equation
of Rabinowitz action functional to define continuation homomorphisms
$$\Phi \colon CF_*(H_0) \to CF_*(H), \quad \Psi \colon
CF_*(H) \to CF_*(H_0).$$ Namely choose a smooth monotone cutoff
function $\beta \in C^\infty(\mathbb{R},[0,1])$ satisfying
$$\beta(s)=0, \,\, s<-1, \qquad \beta(s)=1,\,\,s>1$$
and consider the $s$-dependent Hamiltonians
$$H^+_s=\beta(s)H+(1-\beta(s))H_0, \quad
H^-_s=\beta(s)H_0+(1-\beta(s))H$$ Then $\Phi$ is defined by counting
gradient flow lines of the $s$-dependent Rabinowitz action
functional $\mathcal{A}^\mu_{H^+}$ between a lift of a critical
point of $\mathcal{A}_{H_0}$ and a lift of a critical point of
$\mathcal{A}_H$ and similarly for $\Psi$ one counts gradient flow
lines of $\mathcal{A}^\mu_{H^-}$. To study their composition $\Psi
\circ \Phi$ we consider as usual in Floer homology a homotopy of
homotopies. Namely for $r \in [0,1]$ choose a smooth family of
functions $\beta_r \in C^\infty(\mathbb{R},[0,1])$ which satisfy the
following conditions
\begin{itemize}
 \item $\beta_0=0$, $\beta_1=1$,
 \item $\beta_r$ is compactly supported for $r<1$,
 \item For $r \in [0,1]$ the functions $\beta_r$ are monotone
 increasing for $s<0$ and monotone decreasing for $s>0$,
 \item The time-shifted functions $\big(\frac{1}{r-1}\big)_*\beta_r$
 defined by $\big(\frac{1}{r-1}\big)_*\beta_r(s)
 =\beta_r\big(s+\frac{1}{r-1}\big)$ for $s \in \mathbb{R}$
 converge in the
 $C^\infty_{\mathrm{loc}}$-topology to $\beta$ as $r$ goes to $1$.
 \item The time-shifted functions $\big(\frac{1}{1-r}\big)_*\beta_r$
 converge in the
 $C^\infty_{\mathrm{loc}}$-topology to $1-\beta$ as $r$ goes to $1$.
\end{itemize}
For $r \in [0,1)$ we consider the family of $s$-dependent
Hamiltonians
$$H_r=H_0+\beta_r (H-H_0).$$
 A \emph{Homotopy-Oni} is a pair $(r,w)$ where $r \in
[0,1)$ and $w$ is a finite energy gradient flow line of the time
dependent Rabinowitz action function $\mathcal{A}^\mu_{H_r}$ whose
asymptotic winding numbers satisfy
$\mathfrak{w}(\mathrm{ev}_-(w))=1$ and
$\mathfrak{w}(\mathrm{ev}_+(w))=0$. Counting Homotopy-Onis gives
rise to a linear map
$$\mathfrak{O} \colon CF_*(H_0) \to CF_*(H_0).$$
We denote by
$$\partial \colon CF_*(H_0) \to CF_{*-1}(H_0)$$
the boundary operator obtained by counting Morse gradient flow lines
of $H_0$ on $M$ with respect to a Morse-Smale metric on $M$ coming
from a regular $\omega$-compatible almost complex structure. We
refer to \cite{le-ono,salamon-zehnder} for an existence proof of
such metrics.
\begin{conj}\label{homhom}
There exist linear maps $T \colon CF_*(H_0) \to CF_{*+1}(H_0)$ such
that
$$\Psi \circ \Phi=\mathrm{id}|_{CF_*(H_0)}+T \partial+\partial T
+\mathfrak{O}.$$
\end{conj}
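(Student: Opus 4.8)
We sketch a possible approach. The plan is to run the standard ``homotopy of homotopies'' argument, carefully bookkeeping the winding number $\mathfrak{w}$, and to import the vortex correction exactly as in the proof of Theorem~\ref{oni}. Fix a lift $\ell$ and, for $c_-,c_+\in\mathrm{crit}(\mathcal{A}_{H_0})$, consider the parametrised moduli space
$$
\mathcal{M}(c_-,c_+)=\big\{(r,[w]):r\in[0,1),\ [w]\text{ a finite energy flow line of }\nabla_B\mathcal{A}^\mu_{H_r},\ \mathrm{ev}_-([w])=\ell(c_-),\ \mathrm{ev}_+([w])\in S^1\ell(c_+)\big\}.
$$
An $r$-parametrised version of Proposition~\ref{generic}, proved by the same Sard--Smale argument and using that for $\nu>\max\{n+\kappa(\omega)-1,\kappa(\omega)\}$ the Auroux estimate~(\ref{auroux}) still excludes $J^B$-holomorphic spheres after adjoining the parameter $r$, shows that for generic data $\mathcal{M}(c_-,c_+)$ is a smooth manifold of dimension $\mu_{CZ}(c_-)-\mu_{CZ}(c_+)+1$. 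One defines $T$ by declaring the coefficient of $c_+$ in $Tc_-$ to be $\#_2\mathcal{M}(c_-,c_+)$ when $\mu_{CZ}(c_+)=\mu_{CZ}(c_-)+1$; the content of Conjecture~\ref{homhom} is then that, when $\mu_{CZ}(c_-)=\mu_{CZ}(c_+)$, the one dimensional manifold $\mathcal{M}(c_-,c_+)$ compactifies to a compact one manifold with boundary whose boundary points split into four classes, one for each term on the right hand side of the asserted formula.

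The first step is compactness of this compactification. Along a flow line of $\nabla_B\mathcal{A}^\mu_{H_r}$ both $\mathcal{A}^\mu_{H_r}$ and $\mathcal{A}_{H_r}\circ P$ are decreasing up to an error controlled by $\partial_s\beta_r$, so the Lagrange multipliers are uniformly bounded by the argument of Proposition~\ref{lagbound} and the loops remain in a fixed compact subset of $E$ by the argument of Proposition~\ref{loopbound}, uniformly in $r\in[0,1]$; together with the absence of sphere bubbling this yields $C^\infty_{\mathrm{loc}}$-compactness up to breaking. The breaking configurations are then analysed. At $r=0$ the functional is $s$-independent and $C^2$-small, so the usual small energy argument (cf.~\cite{salamon-zehnder}) leaves only the constant trajectory at $\ell(c_-)$, contributing the coefficient of $c_+$ in $\mathrm{id}(c_-)$. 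As $r\to 1$ the homotopy degenerates into the concatenation of a $\Phi$-flow line and a $\Psi$-flow line through a critical point $\gamma$ of $\mathcal{A}^\mu_H$ (the long intermediate piece along the fixed functional $\mathcal{A}^\mu_H$ being trivial for generic data); a short index count using $\mu_{CZ}(\gamma)=\mu_{CZ}(P\gamma)-2\mathfrak{w}(\gamma)$ together with $\mu_{CZ}(c_-)=\mu_{CZ}(c_+)$ forces $\mathfrak{w}(\gamma)=0$ and rules out fibrewise vortices in this limit, so this stratum contributes the coefficient of $c_+$ in $\Psi\circ\Phi(c_-)$. At an interior $r\in(0,1)$, the breaking off of a genuine translation invariant Morse trajectory of $H_0$ on $M$ at the negative, respectively positive, end contributes the coefficient of $c_+$ in $T\partial(c_-)$, respectively $\partial T(c_-)$.

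The remaining interior degeneration is fibrewise vortex bubbling, and this produces the term $\mathfrak{O}$. Exactly as in the proof of Theorem~\ref{oni} one inspects the shadow $Pw$: when a vortex bubbles off, $Pw$ becomes constant over a critical point of the frozen Hamiltonian, the surviving flow line $w_\infty$ of $\nabla_B\mathcal{A}^\mu_{H_{r_\infty}}$ satisfies $\mathfrak{w}(\mathrm{ev}_-(w_\infty))=1$ and $\mathfrak{w}(\mathrm{ev}_+(w_\infty))=0$, i.e.~$w_\infty$ is a Homotopy-Oni, and by Theorem~\ref{vonu} the vortex number is one, so each Homotopy-Oni arises exactly once as such a boundary point; this uses the complementary gluing statement that a vortex glued to a Homotopy-Oni is $C^\infty_{\mathrm{loc}}$-approximated by a genuine element of $\mathcal{M}(c_-,c_+)$. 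Hence this stratum contributes the coefficient of $c_+$ in $\mathfrak{O}(c_-)$. Summing the four contributions modulo two and using that a compact one manifold has an even number of boundary points gives $\mathrm{id}+\Psi\circ\Phi+T\partial+\partial T+\mathfrak{O}=0$ over $\mathbb{Z}_2$, which is the formula of the Conjecture.

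The main obstacle is to put the last step, and the vortex part of the $r\to 1$ analysis, on a rigorous footing. The vortices now bubble over critical points of the interpolated Hamiltonians $(H_r)_s=H_0+\beta_r(s)(H-H_0)$, which are neither $H_0$ nor $H$ and need not be Morse; one must therefore build an $r$- and $s$-parametrised family of vortex moduli spaces, prove the analogue of Theorem~\ref{vonu} along this family, and establish the gluing theorem reconstructing $\mathcal{M}(c_-,c_+)$ near its vortex boundary, all the while keeping sphere bubbling excluded for homotopies of homotopies, which is the reason $\nu$ is taken larger here than in Section~2. The interplay of vortex bubbling with the two extra parameters and with these non-autonomous, possibly degenerate intermediate Hamiltonians is the genuinely new analytic difficulty, and is why the statement is for the moment only a conjecture.
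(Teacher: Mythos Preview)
Your overall strategy---parametrised moduli space over $r\in[0,1)$, define $T$ by the index-$(+1)$ count, analyse the boundary of the index-$0$ piece into four strata---is exactly the scheme the paper has in mind, and your compactness and $r\to 1$ discussion are fine. However, you have misidentified the genuine obstruction.

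The step that does not go through as written is your claim that at an interior $r\in(0,1)$ ``the breaking off of a genuine translation invariant Morse trajectory of $H_0$ on $M$'' at the ends contributes $T\partial$ and $\partial T$. What actually breaks off at the asymptotic ends is a translation-invariant gradient flow line of $\nabla_B\mathcal{A}^\mu_{H_0}$ on $E$, i.e.\ a trajectory counted by the Rabinowitz Floer map $\mathfrak{R}(H_0)$, not a Morse trajectory of $H_0$ on $M$. The formula in the Conjecture involves the Morse boundary operator $\partial$ on $CF_*(H_0)$, so one needs $\mathfrak{R}(H_0)=\partial$. This identification is precisely the point the paper singles out as the essential difficulty: it is the analogue of showing that Floer homology of a $C^2$-small Hamiltonian equals Morse homology, and without semipositivity the paper expects it to require abstract perturbation theory in the style of Fukaya--Ono. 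You have silently assumed this identification.

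By contrast, the difficulty you emphasise---vortex bubbling over critical points of the intermediate Hamiltonians $(H_r)_s$---is largely spurious. Breaking produces translation-invariant pieces for the asymptotic functional $\mathcal{A}^\mu_{H_0}$, so the vortices that appear sit over critical points of $H_0$, not of the interpolated Hamiltonians; moreover the vortex problem is purely a fibre problem (Appendix~A) and does not see the base Hamiltonian at all, so Theorem~\ref{vonu} applies without modification. Thus your final paragraph locates the analytic weight in the wrong place: the vortex gluing is standard, while the $\mathfrak{R}(H_0)\leftrightarrow\partial$ identification is where the conjecture lives.
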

The maps $T$ in the conjecture arise in the same way as in the usual
homotopy of homotopies argument in Floer homology, see \cite[Section
3.4]{salamon}. The proof of the conjecture should follow by
basically the same argument as in the proof of Theorem~\ref{oni}, up
to one point which involves abstract perturbation theory. This
concerns the identification of the map $\mathfrak{R}(H_0) \colon
CF_*(H_0) \to CF_*(H_0)$ obtained by counting gradient flow lines of
Rabinowitz action functional $\mathcal{A}^\mu_{H_0}$ with the
boundary operator in Morse homology. If the symplectic manifold is
not semipositive, then it is hard to image that such a result can be
proved without the help of abstract perturbation theory. In ordinary
Floer homology this was proved for example in \cite[Section
22]{fukaya-ono}. We expect that the argument of Fukaya and Ono can
be adjusted to our situation which then leads to a proof of the
conjecture.

We like to point out that the Fukaya-Ono argument using multisections proves that the Floer differential for the $C^2$-small $H_0$ agrees with the Morse differential. The Morse differential can be counted with integer coefficients. In their further constructions $\mathbb{Q}$-coefficients are essential whereas our approach works over $\mathbb{Z}/2$-coefficients.

To deduce some useful information from Conjecture~\ref{homhom} one
needs to have some information of the Oni operator. Up to now we
only have some clue on it under the additional assumption that the
Hamiltonian satisfies the condition
\begin{equation}\label{sqr}
H_{t+\frac{1}{2}}=H_t,
\end{equation}
i.e. the time one map $\phi_H$ of the Hamiltonian flow admits a
square root.
\begin{conj}\label{h3}
Assume that the Hamiltonian $H$ satisfies (\ref{sqr}), and that
$\nu$ is even and bigger than $n+\kappa(\omega)$. Then there exist
linear maps
$$F \colon CF_*(H_0) \to
\bigoplus_{i=-2}^1 CF_{*+i}(H), \quad G \colon
\bigoplus_{i=-2}^1CF_{*+i}(H)\to CF_*(H_0),$$ and $$S \colon
CF_*(H_0) \to CF_{*-1}(H_0)$$ such that
\begin{equation}\label{h3eq}
\mathfrak{O}=G \circ F+\partial S+S \partial.
\end{equation}
\end{conj}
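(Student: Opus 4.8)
The plan is to exploit the $\mathbb{Z}/2$-symmetry that condition (\ref{sqr}) together with the parity of $\nu$ forces on the whole construction, and to use it to \emph{marry} Homotopy-Onis in pairs, so that modulo two only the self-married, i.e.\ symmetric, Homotopy-Onis survive; these will turn out to be precisely the broken configurations through $CF_*(H)$ that factor $\mathfrak{O}$ as $G\circ F$. First I would build the involution. Since $H_{t+1/2}=H_t$, the half time shift $t\mapsto t+\tfrac12$ preserves $\mathcal{A}_H$ and $\widehat H$, and — after choosing the data $\beta_r$ and the cutoff $\beta$ so that each $H_r=H_0+\beta_r(H-H_0)$ is again $\tfrac12$-periodic in $t$, which costs nothing — it preserves every $\mathcal{A}_{H_r}$. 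Combined with the half fiber rotation $(\tfrac12)*u=-u$ on $E$ this gives an involution $\sigma(u)(s,t)=-u(s,t+\tfrac12)$ on maps into $E$, hence on $\widetilde{\mathscr{L}}_E\times\mathbb{R}$. This is the point at which $\nu$ even is needed: $\sigma$ lifts to the cover $\widetilde{\mathscr{L}}_E$ compatibly with the winding grading $\mathfrak{w}$ because $E=E_\omega^{\otimes\nu}$ then admits a square root, so that the $\sigma$-fixed (anti-$\tfrac12$-periodic) loops assemble into a bona fide loop space problem in that square root line bundle. One checks that $\sigma$ preserves $\mathfrak{w}$ and sends $\mathrm{crit}(\mathcal{A}^\mu_{H_r})$ to itself; choosing the perturbation $B$ to be $\sigma$-invariant, $\sigma$ acts on the moduli spaces of Homotopy-Onis, so that modulo two the count defining $\mathfrak{O}$ equals the count of $\sigma$-fixed Homotopy-Onis.

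Second, I would dissect a $\sigma$-fixed Homotopy-Oni. It is anti-$\tfrac12$-periodic in $t$, so its shadow $Pw$ is honestly $\tfrac12$-periodic and its asymptotics project to $\tfrac12$-periodic orbits of $X_H$, i.e.\ to fixed points of the square root $\psi=\phi_H^{1/2}$, which lie inside $\mathrm{crit}(\mathcal{A}_H)$. Cutting the configuration at the critical point $\gamma\in\mathrm{crit}(\mathcal{A}^\mu_H)$ where its two $\sigma$-related halves meet presents it as a glued pair: a ``first half'' running from a lift of $c_-$ through the continuation to $H$ and ending at $\gamma$, and its $\sigma$-image as ``second half''. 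Exactly the index bookkeeping from the proof of Theorem~\ref{oni} — the circle family of critical points through $\gamma$ produces the ambiguity, and $\mu_{CZ}(\gamma)=\mu_{CZ}(P\gamma)-2\mathfrak{w}(\gamma)$ — shows that $\mu_{CZ}(P\gamma)$ ranges over the four consecutive values $\mu_{CZ}(c_-)+i$ with $i\in\{-2,-1,0,1\}$. Counting first halves defines $F\colon CF_*(H_0)\to\bigoplus_{i=-2}^1 CF_{*+i}(H)$ and counting second halves defines $G$ in the opposite direction, so that $G\circ F$ computes the modulo-two number of broken symmetric configurations, hence of $\sigma$-fixed Homotopy-Onis up to gluing.

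Third, $G\circ F$ differs from the honest count $\mathfrak{O}$ by the standard boundary contributions of the resulting one-parameter family of cut configurations: a boundary where one of the two halves breaks off a genuine gradient trajectory, which after sorting the broken pieces according to which factor they belong to yields the terms $\partial S+S\partial$, and a boundary at the endpoints of the homotopy parameter, which one arranges to be absorbed into $S$. The compactness needed throughout is Theorem~\ref{compact} applied to $\mathcal{A}^\mu_{H_r}$, while the absence of sphere bubbles in all the parametrised moduli spaces appearing here is exactly what the strengthened bound $\nu>\max\{n+\kappa(\omega)-1,\kappa(\omega)\}$ — together with its counterpart in the square root of $E$ for the $\sigma$-fixed part — is designed to guarantee. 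Reading off the coefficients gives (\ref{h3eq}).

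The main obstacle is transversality, as for Conjecture~\ref{homhom}. Requiring $B$ to be $\sigma$-invariant destroys transversality precisely along the $\sigma$-fixed locus, so one must run an equivariant Fredholm argument: decompose the linearised operator into $\sigma$-invariant and $\sigma$-anti-invariant parts, achieve transversality of the invariant part with a $\sigma$-invariant perturbation and of the complement by killing the anti-invariant cokernel generically, and identify the fixed locus with a transversally cut moduli space of the square root problem so that the coefficients of $F$ and $G$ are well defined. A secondary difficulty is to confirm that the middle complex is genuinely $CF_*(H)$ with exactly the four-term degree window, and that $S$ closes up over the homotopy endpoints; as in Conjecture~\ref{homhom}, a fully rigorous treatment of the $C^2$-small end may in the end still call for abstract perturbations.
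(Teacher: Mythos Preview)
Your proposal rests on a localisation idea that does not get off the ground: the involution acts \emph{freely} on Homotopy-Onis, so there are no $\sigma$-fixed ones to count. A Homotopy-Oni has negative asymptotic of winding number $1$ and positive asymptotic of winding number $0$; the half-time shift acts on a winding-$k$ critical point over $H_0$ as fiber rotation by $(-1)^k$, so the paper's involution is free on the winding-$1$ end, while your composite $\sigma(u)=-u(\cdot+\tfrac12)$ is free on the winding-$0$ end. Either way the fixed locus is empty, and your argument would yield $\mathfrak{O}=0$ rather than a factorisation $G\circ F$. There is a second problem with your ``dissection'': even were a $\sigma$-fixed Oni to exist, it would be a single irreducible flow line, merely anti-$\tfrac12$-periodic in $t$; $\sigma$ acts only in the $t$-direction and creates no intermediate critical point in $s$, so there is no $\gamma\in\mathrm{crit}(\mathcal{A}^\mu_H)$ at which to cut.

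The paper uses the freeness of the involution quite differently. One first introduces \emph{Married} Homotopy-Onis: flow lines for the rotated data $\theta_* J^B$, with $\theta$ dictated by the negative asymptotic via a fixed lift of winding number $1$. The $180^\circ$ rotation pairs these up, so their count $\mathfrak{O}^m$ vanishes. One then builds a cobordism between ordinary and Married Homotopy-Onis through a five-parameter family $J^B_{\theta,\rho}$, the interpolating objects (\emph{Kibidangos}) being chains of flow lines at varying $\rho$, matched along short Morse trajectories of an auxiliary function $h$ on the circles making up $\mathrm{crit}(\mathcal{A}^\mu_H)$. Breaking of Kibidangos at $\mathrm{crit}(\mathcal{A}^\mu_{H_0})$ produces $\partial S+S\partial$, while breaking at critical points of $h$ produces $G\circ F$: for each orbit of $X_H$ there are four relevant break points --- the maximum and minimum of $h$ on each of the winding-$0$ and winding-$1$ circles --- which is the origin of the four summands $\bigoplus_{i=-2}^{1} CF_{*+i}(H)$, not an index window on a symmetric broken trajectory.
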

We give an outline of the proof of Conjecture~\ref{h3}. To consider
a homotopy of homotopies we need a family of almost complex
structures indexed by the homotopy parameter $r \in [0,1]$ which in
addition might depend on the parameters $s$ and $t$. If this family
satisfies
\begin{equation}\label{sqrJ}
J^B_{t+\frac{1}{2},s,r}=J^B_{t,s,r}
\end{equation}
we get an involution on the Homotopy-Oni given by rotating the loop
on $E$ by 180 degrees. Since $\nu$ is even this involution keeps
critical points of $\mathcal{A}^\mu_{H_0}$ of winding number zero
fixed but acts freely on critical points of winding number one.
Therefore this involution is free on the Homotopy-Onis. On the other
in general there is little hope to achieve transversality by keeping
condition (\ref{sqrJ}). To overcome this difficulty we proceed a bit
different. Choose a lift $\ell \colon
\mathrm{crit}(\mathcal{A}_{H_0}) \to
\mathrm{crit}(\mathcal{A}^\mu_{H_0})$ which is a section for the
projection $P \colon \mathrm{crit}(\mathcal{A}_{H_0}^\mu) \to
\mathrm{crit}(\mathcal{A}_{H_0})$ and satisfies
$\mathfrak{w}(\ell(c))=1$ for every critical point $c \in
\mathrm{crit}(\mathcal{A}_{H_0})$. In the following we drop the
subscripts indicating the dependence of the families of almost
complex structures on the $s$ and $r$-parameters. If $\theta \in
S^1$ and $J^B$ is a family of almost complex structures not
necessarily satisfying (\ref{sqrJ}) we set
$$(\theta_*J^B)_t=J^B_{t+\theta}.$$
Recall that on $\mathscr{L}_E$ the circle acts by rotating the loop.
\begin{fed}
A \emph{Married Homotopy-Oni} is a tuple $(w,r)$ where $r \in [0,1)$
and $w$ is a finite energy gradient flow line of $\nabla_{\theta_*
B}\mathcal{A}^\mu_{H_r}$ for some $\theta \in S^1$ satisfying
$\mathrm{ev}_-(w) \in \theta_*
\ell(\mathrm{crit}(\mathcal{A}_{H_0}))$ and
$\mathfrak{w}(\mathrm{ev}_+(w))=0$.
\end{fed}
Married Homotopy-Onis are harmless since each Married Homotopy-Oni
has a partner obtained by rotating the loops by 180 degrees.
Therefore the Oni operator $\mathfrak{O}^m$ obtained by counting
Married Homotopy-Onis modulo two vanishes
$$\mathfrak{O}^m=0\colon CF_*(H_0) \to CF_*(H_0).$$
We next construct a homotopy between Homotopy-Onis and Married
Homotopy-Onis. To this end we first choose a smooth homotopy
$J^B_{\theta,\rho}$ where $\theta \in S^1$ and $\rho \in [0,1]$ such
that
$$J^B_{\theta,0}=J^B, \quad J^B_{\theta,1}=\theta_*J^B.$$
Note that since $J^B$ already depends on the three parameters $t$,
$s$ and $r$ we have no a five-parameter family of almost complex
structures on $E$. We apologize for any inconvenience this might
cause to the reader.
 Since $\nu>\kappa(\omega)+n$ for generic
choice of this five parameter family of almost complex structures
there are no holomorphic spheres. In the following we abbreviate for
$(\theta,\rho) \in S^1 \times [0,1]$
$$\nabla_{\theta,\rho}=\nabla_{B_{\theta,\rho}}.$$Note that the critical manifold of
$\mathcal{A}^\mu_H$ consists of a disjoint union of circles. We
choose a Morse function
$$h \colon \mathrm{crit}(\mathcal{A}^\mu_H) \to \mathbb{R}$$
with the property that $h$ restricted to each circle has precisely
one maximum and one minimum. We further choose a Riemannian metric
on $\mathrm{crit}(\mathcal{A}^\mu_H)$ and denote by
$$\phi^\tau_{\nabla h} \colon \mathrm{crit}(\mathcal{A}^\mu_H) \to
\mathrm{crit}(\mathcal{A}^\mu_H), \quad \tau \in \mathbb{R}$$ the
gradient flow of $h$ on the critical manifold of
$\mathcal{A}^\mu_H$.
\begin{fed}
A \emph{Kibidango}\footnote{Eating Kibidangos Momotaro was able to fight the onis.} is an $m$-tuple
$$\mathfrak{k}=(k^i)_{1 \leq i \leq m}$$
for some positive integer $m$ satisfying the following properties.
\begin{description}
 \item[(i)] If $m=1$, then $k^1=(w,r,\rho)$ is a triple, where $r \in [0,1)$,
 $\rho \in [0,1]$, and $w$ is a finite energy gradient flow line of
 $\nabla_{\theta,\rho}\mathcal{A}^\mu_{H_r}$ with positive asymptotic
 satisfying $\mathfrak{w}(\mathrm{ev}_+(w))=0$ and $\theta \in S^1$
 determined by $\mathrm{ev}_-(w) \in
 \theta_*\ell(\mathrm{crit}\mathcal{A}_{H_0})$.
 \item[(ii)] If $m>1$, then $k^i=(z^i,\rho^i)$ is a tuple
 for each $i \in \{1,\cdots,m\}$, with the following properties
 \begin{description}
 \item[(a)] $0 \leq \rho^1 \leq \rho^2 \leq \cdots \leq \rho^m \leq
 1$,
 \item[(b)] $z^1=w^1$ is a finite energy flow line of
 $\nabla_{\theta,\rho^1}\mathcal{A}^\mu_{H^+}$ where
 $\theta=\theta(\mathfrak{k})$ is determined by $\mathrm{ev}_-(w^1)
 \in \theta_*\ell(\mathrm{crit}(\mathcal{A}_{H_0})$,
 \item[(c)] $z^i=[w^i]$ is an unparametrised finite energy flow line of
 $\nabla_{\theta,\rho^i}\mathcal{A}^\mu_H$ for $1<i<m$,
 \item[(d)] $z^m=w^m$ is a finite energy flow line of
 $\nabla_{\theta,\rho^m}\mathcal{A}^\mu_{H^-}$ whose positive asymptotic
 satisfies $\mathfrak{w}(\mathrm{ev}_-(w^m))=0$,
 \item[(e)] if $0<\rho_i\leq \rho_{i+1}<1$ for $1 \leq i <m$, then
 $\mathrm{ev}_+(w_i)=\mathrm{ev}_-(w_{i+1})$,
 \item[(f)] if $\rho_i=1$ for $1 \leq i <m$, then $\mathrm{ev}_+(w^i)
 \notin \mathrm{crit}(h)$ and there exists $\tau \geq 0$ such that
 $\phi^\tau_{\nabla h}(\mathrm{ev}_+(w^i))=\mathrm{ev}_-(w^{i+1})$,
 \item[(g)] if $\rho_i=0$ for $1<i \leq m$, then $\mathrm{ev}_-(w^i)
 \notin \mathrm{crit}(h)$ and there exists $\tau \geq 0$ such that
 $\phi^\tau_{\nabla h}(\mathrm{ev}_+(w^{i-1}))=\mathrm{ev}_-(w^i)$.
 \end{description}
\end{description}
\end{fed}
Kibidangos interpolate between Homotopy-Onis and Married
Homotopy-Onis. However, the moduli space of Kibidangos does not need
to be compact. Namely Kibidangos might break at critical points of
$\mathcal{A}^\mu_{H_0}$ or critical points of the Morse function $h$
on the critical manifold of $\mathcal{A}^\mu_H$. The first occurence
should give rise to the term $\partial S +S \partial$ in
(\ref{h3eq}). However, to make this precise one has to relate
gradient flow lines of $\mathcal{A}^\mu_{H_0}$ with Morse gradient
flow lines of $H_0$ which requires abstract perturbation theory and
a generalization of the Theorem of Fukaya and Ono to our set-up. To
see at which critical points of $h$ a Kibidango can break one has to
analyze again the shadow of a Kibidango under the projection $P
\colon \widetilde{\mathscr{L}}_E \times \mathbb{R} \to
\widetilde{\mathscr{L}}_M$. By looking at the indices it turns out
that generically breaking can only happen at winding number 0 or 1.
Hence for each critical point of $\mathcal{A}^\mu_H$ there are four
critical points of $h$ at which breaking might occur, namely the
maximum and minimum of $h$ on the two circles corresponding to
winding number 0 and winding number 1. Hence we can identify these
points with vectors in $\bigoplus_{i=-2}^1 CF_{*+1}(H)$ and the
broken Kibidangos give rise to the maps $F$ and $G$ in (\ref{h3eq}).
This finishes the outline of the proof of Conjecture~\ref{h3}.
\hfill $\square$
\\ \\
As a consequence of Conjecture~\ref{homhom} and
Conjecture~\ref{h3} we obtain the following Corollary.
\begin{cor}[assuming Conjectures \ref{homhom} and \ref{h3}]
Assume that $\phi$ is a nondegenerate Hamiltonian symplectomorphism
which has a square root. Then
$$\# \mathrm{Fix}(\phi) \geq
\frac{1}{5}\sum_{k=0}^{2n}b_k(M;\mathbb{Z}_2)$$ where
$b_k(M;\mathbb{Z}_2)$ are the $\mathbb{Z}_2$-Betti numbers of $M$ and $\mathrm{Fix}(\phi)$ is the set of contractible fixed points.
\end{cor}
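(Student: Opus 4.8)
The plan is to feed Conjectures~\ref{homhom} and \ref{h3} into a rank estimate on the Morse complex of the auxiliary $C^2$-small Hamiltonian $H_0$. First I would fix a Hamiltonian $H\in C^\infty(M\times S^1)$ with time-one map $\phi$ satisfying the square-root condition (\ref{sqr}), $H_{t+\frac12}=H_t$: writing $\phi=\psi^2$, such an $H$ is produced from any Hamiltonian isotopy from $\mathrm{id}$ to $\psi$ by a standard reparametrization, and since $\phi$ is nondegenerate $H$ is a nondegenerate Hamiltonian whose contractible $1$-periodic orbits are in bijection with $\mathrm{Fix}(\phi)$. I also fix an even $\nu>n+\kappa(\omega)$, which is compatible with the standing assumption $\nu>\max\{n+\kappa(\omega)-1,\kappa(\omega)\}$, so that both conjectures apply. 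Passing to the Novikov field $\Lambda$ (the field of fractions of the Novikov ring of $\Gamma$, if that ring is not already a field), the complexes $CF_*(H)$ and $CF_*(H_0)$ become finite dimensional over $\Lambda$; here $\dim_\Lambda CF_*(H)=\#\mathrm{Fix}(\phi)=:N$, while $(CF_*(H_0),\partial)$ is by construction the $\Lambda$-linear Morse complex of $H_0$ on $M$, so $H_*\big(CF_*(H_0),\partial\big)\cong H_*(M;\mathbb{Z}_2)\otimes\Lambda$ has $\Lambda$-dimension $B:=\sum_{k=0}^{2n}b_k(M;\mathbb{Z}_2)$.

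Substituting the splitting $\mathfrak{O}=G\circ F+\partial S+S\partial$ of Conjecture~\ref{h3} into the identity $\Psi\circ\Phi=\mathrm{id}+T\partial+\partial T+\mathfrak{O}$ of Conjecture~\ref{homhom} and working over $\mathbb{Z}_2$, I obtain
$$\Psi\circ\Phi+G\circ F\;=\;\mathrm{id}_{CF_*(H_0)}+(T+S)\partial+\partial(T+S)\;=:\;A .$$
Because $\partial^2=0$, the endomorphism $A$ is a chain map for $\partial$, chain homotopic to the identity via $T+S$, hence induces the identity on $H_*\big(CF_*(H_0),\partial\big)$. I would then invoke the elementary fact that every chain endomorphism $A$ of a finite-dimensional complex $(V,\partial)$ over a field obeys $\mathrm{rank}\,A\ge\mathrm{rank}\,\bar A$, where $\bar A$ is the induced map on homology: since $A$ preserves $\ker\partial$ and $\mathrm{im}\,\partial$,
$$\mathrm{rank}\,A\;\ge\;\dim A(\ker\partial)\;\ge\;\dim\big(A(\ker\partial)+\mathrm{im}\,\partial\big)-\dim\mathrm{im}\,\partial\;=\;\mathrm{rank}\,\bar A .$$
Applied to our $A$ this gives $\mathrm{rank}(\Psi\circ\Phi+G\circ F)\ge B$, and by subadditivity of rank
$$\mathrm{rank}(\Psi\circ\Phi)+\mathrm{rank}(G\circ F)\;\ge\;B .$$

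It remains to estimate the two summands by dimension alone. The composition $\Psi\circ\Phi$ factors through $CF_*(H)$, so $\mathrm{rank}(\Psi\circ\Phi)\le\dim_\Lambda CF_*(H)=N$; and $G\circ F$ factors through $\bigoplus_{i=-2}^{1}CF_{*+i}(H)$, of $\Lambda$-dimension $4N$, so $\mathrm{rank}(G\circ F)\le 4N$. Therefore $B\le N+4N=5N$, that is
$$\#\mathrm{Fix}(\phi)=N\;\ge\;\tfrac15\sum_{k=0}^{2n}b_k(M;\mathbb{Z}_2),$$
which is the asserted estimate; the factor $5$ in place of the $1$ of the Arnold conjecture is exactly the loss of $80\%$ of the total Betti number advertised in the introduction. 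Granting the two conjectures the remaining argument is pure linear algebra, and its only delicate point is the coefficient bookkeeping: all of $\Phi,\Psi,F,G,T,S$ and $\partial$ must be taken $\Lambda$-linear so that ranks over $\Lambda$ are meaningful and the four-fold factor in $\bigoplus_{i=-2}^{1}CF_{*+i}(H)$ is accounted correctly. The genuine obstacles lie upstream, in Conjectures~\ref{homhom} and \ref{h3} themselves, both of which rest on a Fukaya--Ono-type identification of the Rabinowitz Floer map $\mathfrak{R}(H_0)$ with the Morse differential of $H_0$. \hfill $\square$
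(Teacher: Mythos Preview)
Your argument is correct and follows the paper's overall strategy: combine Conjectures~\ref{homhom} and \ref{h3} into the single relation $\Psi\Phi+GF=\mathrm{id}+R\partial+\partial R$ with $R:=T+S$ over a Novikov field, then finish by elementary linear algebra. The only difference is in how that last step is packaged. The paper bundles $\Phi$ and $F$ into one map $\widetilde\Phi=(\Phi,F)\colon V\to W\oplus X$ with $\widetilde\Psi(w,x)=\Psi w+Gx$, so that $\widetilde\Psi\widetilde\Phi=\mathrm{id}+R\partial+\partial R$, and then shows directly that $\ker\big(\widetilde\Phi|_{\ker\partial}\big)\subset\mathrm{im}\,\partial$, whence $\dim H(V,\partial)\le\dim(W\oplus X)=5N$. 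You instead keep the two factorizations separate, bound $\mathrm{rank}(\Psi\Phi+GF)$ from below by the Betti sum via the induced identity on homology, and from above by $N+4N$ via subadditivity of rank. The two arguments are equivalent reformulations of the same fact; the paper's version isolates it as a standalone proposition, while yours is slightly more flexible in that it never needs to name the combined map. One small technical point: the paper obtains the field $\Lambda$ by passing to the Novikov ring over $\Gamma_0=\pi_2(M)/\ker\omega$, which is automatically a field since $\omega$ embeds $\Gamma_0$ into $\mathbb{R}$; your ``field of fractions of the Novikov ring of $\Gamma$'' can run into trouble if that ring is not a domain, so the paper's choice is the cleaner one here.
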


\begin{proof}
 The proof follows from next Proposition as follows. So far, we worked with the Novikov ring over
 $$
 \Gamma=\frac{\pi_2(M)}{\ker\omega\cap\ker c_1}\;.
 $$
 This as the advantage of having a well-defined grading. Instead, if one uses the Novikov ring over
 $$
 \Gamma_0=\frac{\pi_2(M)}{\ker\omega}
 $$
 then Floer homology looses its $\mathbb{Z}$-grading. On the other hand the Novikov ring over $\Gamma_0$ is a field $\Lambda$. The corresponding chain groups are denoted by $CF(H)$. Thus,
 $$
 \#\mathrm{Fix}(\phi)=\dim_\Lambda CF(H).
 $$
 Moreover, $\dim_\Lambda HF(H_0)=\sum_{k=0}^{2n}b_k(M;\mathbb{Z}_2)$. We set
 $$
 V:=CF(H_0),\;W:=CF(H),\; X:=W\oplus W\oplus W\oplus W\;.
 $$
 Furthermore, we set $R:=T+S$. Then we conclude from the next Proposition that
 $$
 \sum_{k=0}^{2n}b_k(M;\mathbb{Z}_2)=\dim H(V,\partial)\leq 5\dim W=5\# \mathrm{Fix}(\phi)\;.
 $$ 
\end{proof}

\begin{prop}
Let $V$, $W$, and $X$ finite dimensional vector spaces over some fixed field. We consider maps 
$$
\begin{aligned}
\Phi:V&\to W\\
\Psi:W&\to V\\
F:V&\to X\\
G:X&\to V\\
\partial:V&\to V\\
R:V&\to V\\
\end{aligned}
$$
satisfying $\partial^2=0$ and 
$$
\Psi\Phi+GF=\mathrm{id}_V+R\partial+\partial R\;.
$$
Then the following inequality holds
$$
\dim H(V,\partial)\leq\dim W+\dim X\;.
$$
\end{prop}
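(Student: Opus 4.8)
The plan is to construct, directly on chosen cycle representatives, an injection of $H(V,\partial)$ into $W\oplus X$. Introduce the linear maps $A\colon V\to W\oplus X$, $A(v)=(\Phi v,Fv)$, and $C\colon W\oplus X\to V$, $C(w,x)=\Psi w+Gx$; with this notation the hypothesis reads $CA=\mathrm{id}_V+R\partial+\partial R$. The first thing to record is the elementary identity
$$
CA(z)=z+\partial(Rz)\qquad\text{for every }z\in\ker\partial,
$$
which is immediate from $R\partial z=0$. In particular $CA(z)$ is again a cycle, cohomologous to $z$.

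Next I would pick cycles $z_1,\dots,z_h\in\ker\partial$ whose classes form a basis of $H(V,\partial)$, where $h=\dim H(V,\partial)$, and show that $A(z_1),\dots,A(z_h)\in W\oplus X$ are linearly independent. Suppose $\sum_i\lambda_i A(z_i)=0$. Applying $C$ and using linearity gives $CA\big(\sum_i\lambda_i z_i\big)=0$, hence by the identity above, with $z:=\sum_i\lambda_i z_i$ (a cycle), $0=z+\partial(Rz)$, so $z=-\partial(Rz)\in\mathrm{im}\,\partial$. Therefore $\sum_i\lambda_i[z_i]=[z]=0$ in $H(V,\partial)$, which forces $\lambda_1=\dots=\lambda_h=0$ since the $[z_i]$ are a basis. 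Consequently $h\le\dim(W\oplus X)=\dim W+\dim X$, which is the assertion.

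I do not expect a genuine obstacle here: the argument uses only linearity of $\Phi,\Psi,F,G$, the relation $CA=\mathrm{id}_V+R\partial+\partial R$, and $\partial^2=0$ (the latter merely to make $H(V,\partial)$ a meaningful quotient). The one pitfall to avoid is trying to pass to induced maps on homology for $\Psi\Phi$ and $GF$ \emph{separately} — these are in general not chain maps, and only their sum $CA$ commutes with $\partial$; working with cycle representatives sidesteps this entirely. If one prefers a more invariant formulation, one can instead note that $CA$ descends to $V/\mathrm{im}\,\partial$ and restricts to the identity on the subspace $H(V,\partial)$, so that $A$ induces an embedding $H(V,\partial)\hookrightarrow W\oplus X$; but the computation above is the most economical route.
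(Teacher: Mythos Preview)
Your proof is correct and follows essentially the same approach as the paper: both combine $\Phi,F$ into a single map $V\to W\oplus X$ (your $A$, the paper's $\widetilde\Phi$) and likewise $\Psi,G$, then use the relation to show that any cycle annihilated by this map is a boundary. The only cosmetic difference is that the paper phrases the final step via rank--nullity on $\widetilde\Phi|_{\ker\partial}$, whereas you choose explicit cycle representatives and show their images are linearly independent; these are equivalent formulations of the same observation.
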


\begin{proof}
First we explain that we may assume that $X=\{0\}$ is the trivial vector space. For that we set
$$
\begin{aligned}
\widetilde\Phi:V&\to W\oplus X \\
v&\mapsto (\Phi v, Fv)\\[1ex]
\widetilde\Psi:W\oplus X&\to V \\
(w,x)&\mapsto \Psi w+Gx
\end{aligned}
$$
and compute
$$
\widetilde\Psi\widetilde\Phi=\Psi\Phi+GF=\mathrm{id}_V+R\partial+\partial R\;.
$$
Thus, the assertion of the Proposition in the case $X=\{0\}$ implies the general case since $\dim W\oplus X=\dim W+\dim X$. It remains to prove
$$
\dim H(V,\partial)\leq\dim W\;.
$$
whenever
$$
\Psi\Phi=\mathrm{id}_V+R\partial+\partial R\;.
$$
First we show that
$$
\ker\big(\Phi|_{\ker\partial}\big)\subset\mathrm{im}\: \partial\;.
$$
Indeed, if $v\in V$ satisfies $\partial v=0$ and $\Phi v=0$ then
$$
0=\Psi\Phi v=v+R\partial v+\partial Rv=v+\partial Rv\;.
$$
Then we can estimate
$$
\begin{aligned}
\dim H(V,\partial)&=\dim\ker\partial-\dim \mathrm{im}\: \partial \\
&=\underbrace{\dim\ker\big(\Phi|_{\ker\partial}\big)}_{\leq\dim \mathrm{im}\:\partial}+\dim\mathrm{im}\:\big(\Phi|_{\ker\partial}\big)-\dim \mathrm{im}\: \partial \\
&\leq \dim\mathrm{im}\:\big(\Phi|_{\ker\partial}\big)\\
&\leq \dim W
\end{aligned}
$$
and this proves the Proposition.
\end{proof}


\appendix

\section{Vortices}

\subsection{The vortex equation}

As vortices we refer to solutions $(u,\eta) \in
C^\infty(\mathbb{R}\times S^1,\mathbb{C}) \times
C^\infty(\mathbb{R},\mathbb{R})$ of the problem
\begin{equation}\label{vortex}
\left.\begin{array}{c}
\partial_s u+i\partial_t u-2\pi \eta u=0 \\
\partial_s \eta-\pi\int_0^1|u|^2(t,\cdot)dt+\pi=0\\
\end{array}\right\}
\end{equation}
whose energy is finite
$$E(u,\eta):=\int_{\mathbb{R}\times S^1} |\partial_s u|^2 ds dt+
\int_{\mathbb{R}}|\partial_s \eta|^2 ds<\infty.$$ Vortices arise as
gradient flow lines of Rabinowitz action functional
$$\mathcal{A}^\mu \colon C^\infty(S^1,\mathbb{C}) \times
\mathbb{R}\to \mathbb{R}$$ given by
$$\mathcal{A}^\mu(u,\eta)=-\int u^*\lambda-\eta \int
\mu(u) dt$$ where $\lambda=x dy$ is the primitive of the standard
symplectic structure on $\mathbb{C}$. The gradient is taken with
respect to the product metric on $C^\infty(S^1,\mathbb{C}) \times
\mathbb{R}$ which on the first factor is given by the $L^2$-metric
and on the second by the standard inner product of $\mathbb{R}$.

The differential of Rabinowitz action function $d\mathcal{A}^\mu$ is
invariant under the $S^1 \times \mathbb{Z}$-action on
$C^\infty(S^1,\mathbb{C}) \times \mathbb{R}$ given by
$$(r,k)_*(v,\eta)=((r,k)_*v, \eta+k), \quad (r,k) \in S^1 \times \mathbb{Z}$$
where
$$(r,k)_*v(t)=e^{-2\pi i r} e^{-2\pi i k t}v(t),\quad t \in S^1.$$
Due to the invariance under this group action Rabinowitz action
functional $\mathcal{A}^\mu$ is not Morse but only Morse-Bott. The
critical manifold consists of one single $S^1 \times \mathbb{Z}$
orbit, namely
$$\mathrm{crit}(\mathcal{A}^\mu)=(S^1 \times \mathbb{Z})_*(1,0).$$
Note that since the metric is invariant under the action as well,
the vortex equations itself are $S^1 \times \mathbb{Z}$-invariant.
The action value for the critical point corresponding to $(r,k) \in
S^1\times \mathbb{Z}$ computes to be
$$\mathcal{A}^\mu\big((r,k)_*(1,0)\big)=\pi k$$
namely the area of the unit disk times the winding number around it.
The assumption that the energy is finite guarantees that vortices
exponentially converge to critical points of $\mathcal{A}^\mu$ at
both asymptotic ends. In particular, if $(u,\eta)$ is a vortex there
exist $(r^\pm,k^\pm) \in S^1 \times \mathbb{Z}$ such that
$$\lim_{s \to \pm \infty}(u,\eta)(s)=(r^\pm,k^\pm)_*(1,0).$$
Since the action is nonincreasing along gradient flow lines, we
observe that
\begin{equation}\label{aswin}
k^- \geq k^+.
\end{equation}
Alternatively, this fact can also be
deduced via positivity of intersections by interpreting $-k^\pm$ as
asymptotic winding numbers of the vortex. We further note, that the
inequality is strict, unless the vortex is constant.

\subsection{Transversality}

In this section we show that the standard complex structure on
$\mathbb{C}$ given by multiplication with $i$ is regular, i.e. the
linearization of the vortex equation at each vortex is surjective.
Since Rabinowitz action functional is only Morse-Bott we have to
consider the linearization in suitable weighted Sobolev spaces in
order that it becomes a Fredholm operator. Because critical points
of $\mathcal{A}^\mu$ consist of a single $S^1 \times
\mathbb{Z}$-orbit, the spectrum of the Hessian is independent of the
critical point. We choose $\delta>0$ smaller then the spectral gap
at zero, i.e. smaller then the minimum of the absolute value of all
nonzero eigenvalues of the Hessian. We further choose a smooth
function $\beta \in C^\infty(\mathbb{R},[-1,1])$, for which there
exist $T>0$ with the property that
$$\beta(s)=\left\{\begin{array}{cc}
-1 & s<-T\\
1 & s>T
\end{array}\right.$$
We define
$$\gamma_\delta \in C^\infty(\mathbb{R},\mathbb{R}), \qquad
\gamma_\delta(s)=e^{\beta(s) \delta},\,\, s\in \mathbb{R}.$$ We
abbreviate
$$W^{1,2}_\delta=\big\{f \in
W^{1,2}_{loc}: f\gamma_\delta \in W^{1,2}\big\}$$ the space of all
$W^{1,2}$-functions which at both asymptotics exponentially decay
with weight at least $\delta$. Note that this definition is
independent of the choice of the function $\beta$. Using these
spaces the linearization along a vortex gives rise to a Fredholm
operator
$$D=D_{(u,\eta)} \colon W_{-\delta}^{1,2}(\mathbb{R}\times S^1,\mathbb{C}) \times
W_{-\delta}^{1,2}(\mathbb{R},\mathbb{R}) \to
L^2_{-\delta}(\mathbb{R}\times S^1,\mathbb{C}) \times
L^2_{-\delta}(\mathbb{R},\mathbb{R})$$ given for
$(\hat{u},\hat{\eta}) \in W_{-\delta}^{1,2}(\mathbb{R}\times
S^1,\mathbb{C}) \times W_{-\delta}^{1,2}(\mathbb{R},\mathbb{R})$ by
$$D(\hat{u},\hat{\eta})=\left\{\begin{array}{c}
\partial_s \hat{u}+i\partial_t \hat{u}-2 \pi\eta \hat{u}-2 \pi u\hat{\eta}\\
\partial_s \hat{\eta}-2\pi \int u \hat{u}.
\end{array}\right.
$$

\begin{prop}\label{transverse}
Along any vortex the Fredholm operator $D$ is surjective.
\end{prop}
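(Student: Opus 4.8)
The plan is to prove surjectivity by showing that the cokernel of $D$ vanishes. Since $D$ is a Fredholm operator, $\mathrm{coker}(D)$ is isomorphic to the kernel of the formal $L^2$-adjoint $D^*$ acting on the dual weighted spaces, i.e.\ to pairs $(\zeta,\sigma)$ in $W^{1,2}_{\delta}(\mathbb{R}\times S^1,\mathbb{C})\times W^{1,2}_{\delta}(\mathbb{R},\mathbb{R})$ — decaying exponentially at both ends — with $D^*(\zeta,\sigma)=0$, and such a pair is automatically smooth by elliptic regularity. Pairing $D(\hat u,\hat\eta)$ against $(\zeta,\sigma)$ and integrating by parts, and using that with respect to the \emph{real} inner product on $\mathbb{C}$ (the one induced by $\omega(\cdot,i\cdot)$) the operator $i\partial_t$ is self-adjoint, so that the adjoint of the coupling $\hat\eta\mapsto-2\pi u\hat\eta$ is $\hat u\mapsto -2\pi\int_0^1\langle u,\hat u\rangle\,dt$, one finds
$$
D^*(\zeta,\sigma)=\Big(-\partial_s\zeta+i\partial_t\zeta-2\pi\eta\zeta-2\pi u\sigma,\ -\partial_s\sigma-2\pi{\textstyle\int_0^1}\langle u,\zeta\rangle\,dt\Big).
$$
So it remains to rule out nonzero exponentially decaying solutions of $D^*(\zeta,\sigma)=0$.

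The decisive step is an energy-type identity that disentangles the Lagrange multiplier. Given $(\zeta,\sigma)\in\ker D^*$ I would introduce
$$
\rho(s)=\int_0^1\langle u(s,t),\zeta(s,t)\rangle\,dt,\qquad N(s)=\int_0^1|u(s,t)|^2\,dt,
$$
noting $N(s)>0$ for all $s$: after the $s$-gauge transformation $u\mapsto e^{h(s)}u$ with $h'=2\pi\eta$ a vortex becomes a holomorphic function of $e^{2\pi(s+it)}$, hence has discrete zero set, so $u$ cannot vanish on a whole slice. The second component of $D^*(\zeta,\sigma)=0$ gives $\partial_s\sigma=-2\pi\rho$. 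Differentiating $\rho$ in $s$, inserting the vortex equation $\partial_s u=-i\partial_t u+2\pi\eta u$ and the first component of $D^*(\zeta,\sigma)=0$, and using $\int_0^1\langle u,i\partial_t\zeta\rangle\,dt=\int_0^1\langle i\partial_t u,\zeta\rangle\,dt$ (self-adjointness of $i\partial_t$) to cancel the $i\partial_t$-terms while the $\eta$-terms cancel algebraically, I obtain $\partial_s\rho=-2\pi N\sigma$. Hence $Q:=\rho\sigma$ satisfies $\partial_s Q=-2\pi\bigl(N\sigma^2+\rho^2\bigr)\le 0$. Since $\rho$ and $\sigma$ decay exponentially at both ends, $Q(\pm\infty)=0$, so the monotone function $Q$ vanishes identically; therefore $N\sigma^2+\rho^2\equiv 0$, and with $N>0$ this forces $\sigma\equiv 0$ and $\rho\equiv 0$.

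It then remains to show $\zeta\equiv 0$. With $\sigma\equiv 0$ the first component of $D^*(\zeta,\sigma)=0$ reads $\partial_s\zeta-i\partial_t\zeta+2\pi\eta\zeta=0$, which under the Fourier expansion $\zeta=\sum_{n\in\mathbb{Z}}\zeta_n(s)e^{2\pi int}$ decouples into the scalar ODEs $\partial_s\zeta_n=-2\pi\bigl(n+\eta(s)\bigr)\zeta_n$. Since $\eta(s)\to k^\pm$ exponentially as $s\to\pm\infty$, a nonzero $\zeta_n$ behaves like $e^{-2\pi(n+k^\pm)s}$ near the respective end, so membership in the weighted space requires $n+k^+>0$ and $n+k^-<0$, i.e.\ $-k^+<n<-k^-$. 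But the asymptotic winding numbers of a vortex obey $k^-\ge k^+$ by (\ref{aswin}), so this interval contains no integer; hence every $\zeta_n$, and therefore $\zeta$, vanishes. This gives $\ker D^*=0$ and proves the proposition. Constant vortices need no separate treatment, as the interval $(-k^+,-k^-)$ is empty for them as well.

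The hard part is the middle paragraph: everything rests on spotting the monotone quantity $Q=\rho\sigma$, the analogue for the Rabinowitz functional of the usual energy identity, which is precisely what permits the passage from the Morse--Bott operator $D$ to an essentially Cauchy--Riemann problem on the cylinder. The remaining ingredients — the weighted Fredholm package for $D$ and $D^*$ with $\delta$ below the spectral gap, elliptic regularity of cokernel elements, and the exponential convergence of finite-energy vortices to their asymptotic critical points — are standard but must be set up before the identity can be exploited.
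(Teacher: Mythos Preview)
Your proof is correct and follows essentially the same strategy as the paper: show the formal adjoint $D^*$ has trivial kernel by first eliminating the Lagrange-multiplier component via an energy identity, then killing the remaining Cauchy--Riemann-type component by Fourier expansion and the winding-number inequality $k^-\ge k^+$. The only cosmetic difference is that you package the energy identity as monotonicity of $Q=\rho\sigma$ (which leads you to verify $N>0$ via isolated zeros of the gauged holomorphic map), whereas the paper combines your two first-order relations into the second-order equation $\partial_s^2\hat\eta=4\pi^2 N\hat\eta$, multiplies by $\hat\eta$ and integrates by parts to get $\partial_s\hat\eta\equiv 0$ without needing $N>0$ pointwise.
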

\textbf{Proof: }Showing surjectivity of $D$ is equivalent to showing
injectivity for the adjoint operator $D^*$. The adjoint operator
$$D^* \colon W_\delta^{1,2}(\mathbb{R}\times S^1,\mathbb{C}) \times
W_\delta^{1,2}(\mathbb{R},\mathbb{R}) \to
L^2_\delta(\mathbb{R}\times S^1,\mathbb{C}) \times
L^2_\delta(\mathbb{R},\mathbb{R})$$  is given by
$$D^*(\hat{u},\hat{\eta})=\left\{\begin{array}{c}
-\partial_s \hat{u}+i\partial_t \hat{u}-2 \pi \eta \hat{u}-2 \pi u\hat{\eta}\\
-\partial_s \hat{\eta}-2\pi \int u \hat{u}.
\end{array}\right.
$$
Assume that
$$(\hat{u},\hat{\eta}) \in \mathrm{ker}D^*.$$
We first show that $\hat{\eta}$ vanishes. For this purpose we
compute using (\ref{vortex})
\begin{eqnarray*}
\partial_s^2 \hat{\eta}&=&-2\pi \int (\partial_s u)\hat{u}
-2\pi \int u (\partial_s \hat{u})\\
&=&2 \pi\int \langle i\partial_t u, \hat{u}\rangle -4\pi^2 \int \eta
\langle u,\hat{u} \rangle\\
& &-2\pi \int \langle u,i\partial_t \hat{u}\rangle +4 \pi^2 \int
\eta \langle u, \hat{u} \rangle +4\pi^2 \int \hat{\eta} \langle u,
u\rangle\\
&=&4\pi^2 \hat{\eta} \int_0^1|u|^2dt
\end{eqnarray*}
Taking the product of this expression with $\hat{\eta}$ and
integrating over $\mathbb{R}$ we obtain via integration by parts
$$4\pi^2 \int_{-\infty}^\infty |\hat{\eta}(s)|^2 \bigg(\int_0^1
\hat{u}(s,t)|dt\bigg)ds=-\int_{-\infty}^\infty |\partial_s
\hat{\eta}|^2 ds.$$ The lefthandside is nonnegative and the
righthandside is nonpositive therefore both sides have to vanish and
we conclude that $\partial_s \hat{\eta}$ vanishes identically. Since
$\hat{\eta} \in W_\delta^{1,2}(\mathbb{R},\mathbb{R})$ we conclude
$$\hat{\eta}=0.$$
Using again that $(\hat{u},\hat{\eta})$ is in the kernel of $D^*$ we
conclude that $\hat{u}$ is a solution of the PDE
$$-\partial_s \hat{u}+i \partial_t \hat{u}-2\pi \eta \hat{u}=0.$$
We expand $\hat{u}$ into a time dependent Fourierseries
$$\hat{u}(s,t)=\sum_{k \in \mathbb{Z}}a_k(s) e^{2\pi i kt}.$$
The Fouriercoefficients are solutions of the ODE
$$\partial_s a_k+2\pi(k+\eta)a_k=0.$$
Asymptically $\eta$ converges to minus the asympotic winding numbers
of the vortex $(u,\eta)$
$$\lim_{s \to \pm \infty}\eta(s)=k^\pm.$$
which satisfy $k^- \geq k^+$ by (\ref{aswin}). We claim that
\begin{equation}\label{coef}
a_k=0, \quad k \in \mathbb{Z}.
\end{equation}
Otherwise, since $a_k$ decays exponentially at the positive end we
would obtain
$$k>-k^+.$$
Since $a_k$ also decays exponentially at the negative end the
inequality
$$k<-k^-$$
has to hold true as well. Together we conclude
$$k^+<k^-$$
contradicting (\ref{aswin}). This proves (\ref{coef}) and therefore
$$\hat{u}=0.$$
We have shown that $(\hat{u},\hat{\eta})=(0,0)$ and the proof of the
Proposition is complete. \hfill $\square$

\subsection{The vortex number}

If the asymptotic winding numbers $k^\pm$ of a vortex $(u,\eta)$
satisfy
$$k^-=k^++1,$$
then the Fredholm index of the Fredholm operator $D=D_{(u,\eta)}$ is
$$\mathrm{ind}(D)=3.$$
We denote by $\widetilde{\mathfrak{V}}(k^-,k^+)$ the moduli space of
all vortices with asymptotic winding numbers $k^-$ and $k^+$. By
Proposition~\ref{transverse} it is a three dimensional manifold. The
group $\mathbb{R}$ acts on vortices by timeshift and the group $S^1$
acts on vortices by rotation of the domain of the loop as well as by
rotation on the target $\mathbb{C}$. Since all this actions commute
we get an $\mathbb{R}\times S^1 \times S^1$ action on
$\widetilde{\mathfrak{V}}(k^-,k^+)$. Since the two asympotics are
different, the action is free. Therefore the quotient
$$\mathfrak{V}(k^-,k^+)=\widetilde{\mathfrak{V}}(k^-,k^+)/
(\mathbb{R}\times S^1 \times S^1)$$ is a zero dimensional manifold.
Since it is compact, see \cite{cieliebak-frauenfelder, frauenfelder}
it is a finite set and we define the vortex number as
$$\mathfrak{v}=
\#\mathfrak{V}(k^-,k^+)\,\,\mathrm{mod}\,\,2 \in \mathbb{Z}_2.$$
Note that since the vortex equation is invariant under the
$\mathbb{Z}$-action the vortex number does not depend on the
asymptotic winding numbers. The vortex number has the following
interpretation. If $(r^-,k^-)$ and $(r^+,k^+)$ are two elements in
$S^1 \times \mathbb{Z}$ satisfying $k^-=k^++1$ then $\mathfrak{v}$
is the modulo 2 number of vortices $(u,\eta)$ subject to the
asymptotic conditions
$$\lim_{s \to \pm \infty}(u,\eta)(s)=
(r^\pm,k^\pm)_*(1,0) \in \mathrm{crit}(\mathcal{A}^\mu).$$

\begin{thm}\label{vonu}
The vortex number equals one.
\end{thm}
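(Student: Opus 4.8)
The plan is to reduce everything, by explicit elementary manipulations, to a single scalar second‑order ODE and then invoke a maximum principle. Since the vortex number does not depend on the asymptotic winding numbers, I would take $k^-=1$, $k^+=0$; thus I must count, modulo the free $\mathbb{R}\times S^1\times S^1$‑action, the finite‑energy vortices $(u,\eta)$ with $(u,\eta)(s)\to(e^{-2\pi ir^-}e^{-2\pi it},1)$ as $s\to-\infty$ and $(u,\eta)(s)\to(e^{-2\pi ir^+},0)$ as $s\to+\infty$.

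Next comes a Fourier reduction, entirely parallel to the one in the proof of Proposition~\ref{transverse}. Writing $u(s,t)=\sum_{k\in\mathbb{Z}}a_k(s)e^{2\pi ikt}$, the first vortex equation decouples into $\partial_s a_k=2\pi(k+\eta)a_k$. Because $\eta(s)\to 1$ at $-\infty$ and $\eta(s)\to 0$ at $+\infty$, a nonzero solution of $\partial_s a=c(s)a$ cannot tend to $0$ at the end where the limiting rate has the wrong sign; this forces $a_k\equiv 0$ for $k\notin\{-1,0\}$, and then $a_{-1},a_0$ are nowhere zero. Writing $a_k=\rho_ke^{i\theta_k}$ the same equation gives $\dot\theta_k=0$, and the $S^1\times S^1$ of domain and target rotations acts freely and transitively on the constant pair $(\theta_{-1},\theta_0)$. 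So modulo symmetry $u=\rho_{-1}(s)e^{-2\pi it}+\rho_0(s)$ with $\rho_{-1},\rho_0>0$, and the vortex equations become the autonomous system
\[
\dot\rho_{-1}=2\pi(\eta-1)\rho_{-1},\qquad \dot\rho_0=2\pi\eta\rho_0,\qquad \dot\eta=\pi(\rho_{-1}^2+\rho_0^2-1),
\]
with limits $(1,0,1)$ at $-\infty$ and $(0,1,0)$ at $+\infty$. Since $\tfrac{d}{ds}\ln(\rho_0/\rho_{-1})=2\pi$, I would use the remaining timeshift to normalise $\rho_0=\rho_{-1}e^{2\pi s}$; the vortex number is then the parity of the number of solutions of this normalised problem.

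For existence I would just exhibit the explicit solution
\[
u_0(s,t)=\frac{e^{-2\pi it}+e^{2\pi s}}{1+e^{2\pi s}},\qquad \eta_0(s)=\frac{1}{1+e^{2\pi s}},
\]
and verify by substitution that it solves (\ref{vortex}), has finite energy, is already in normalised form, and realises the required asymptotics with $r^\pm=0$. For uniqueness I would pass to the single function $g:=\ln(\rho_{-1}^2+\rho_0^2)$. A short computation from the system above together with $\rho_0=\rho_{-1}e^{2\pi s}$ gives
\[
\ddot g=4\pi^2\bigl(e^g-\tanh^2(2\pi s)\bigr),\qquad g(\pm\infty)=0,
\]
and conversely $g$ recovers $\rho_{-1}$, hence $\rho_0$ and $\eta$, so a normalised vortex is the same thing as a decaying solution of this scalar boundary value problem. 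If $g_1,g_2$ are two such solutions, $w:=g_1-g_2$ satisfies $\ddot w=c(s)w$ with $c(s)=4\pi^2\frac{e^{g_1}-e^{g_2}}{g_1-g_2}>0$ and $w(\pm\infty)=0$; a positive interior maximum of $w$ would force $0\ge\ddot w=c(s)w>0$, so $w\equiv 0$. Hence the normalised problem has exactly one solution, $\mathfrak{V}(k^-,k^+)$ is a single point, and $\mathfrak v=1$.

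I expect the two steps in the last paragraph to be the real content: guessing the explicit vortex $u_0$ (equivalently, observing that the ansatz $\eta=\tfrac12(1-\tanh(\omega s))$ is consistent with all three equations precisely when $\omega=\pi$), and finding the substitution $g=\ln(\rho_{-1}^2+\rho_0^2)$ that collapses the reduced system to a scalar equation with monotone nonlinearity, so that the maximum principle applies. The Fourier truncation and the symmetry bookkeeping are routine and repeat computations already present in the paper.
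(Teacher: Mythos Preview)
Your argument is correct and complete: the Fourier truncation, the symmetry reduction, the explicit solution, and the maximum principle for $g=\ln(\rho_{-1}^2+\rho_0^2)$ all check out. In particular the computation $\ddot g=4\pi^2(e^g-\tanh^2(2\pi s))$ is right, and the recovery of $(\rho_{-1},\rho_0,\eta)$ from $g$ is a genuine bijection, so uniqueness of the decaying solution of the scalar ODE really does give $\#\mathfrak V(1,0)=1$ on the nose (not just mod $2$).

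This is, however, an entirely different proof from the one in the paper. The paper gives a soft homological argument: the unit circle in $\mathbb{C}$ is Hamiltonian displaceable, so Rabinowitz Floer homology of $(\mathbb{C},S^1)$ vanishes by \cite{cieliebak-frauenfelder}; were the vortex number zero, the Rabinowitz Floer complex would have trivial differential and its homology would equal the homology of the critical manifold $\mathrm{crit}(\mathcal A^\mu)\cong\coprod_{\mathbb Z}S^1$, which is nonzero. Your proof is elementary and constructive: it produces the unique vortex explicitly and never invokes Floer theory. The trade-off is that the paper's argument is a two-line consequence of machinery already in place and generalises in spirit (the same displacement trick computes many such counts), while your argument is self-contained and yields sharper information---an explicit formula for the vortex and the stronger conclusion that $\mathfrak V(1,0)$ is a single point rather than merely of odd cardinality. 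The paper also mentions an alternative proof via finite-dimensional approximation in \cite{frauenfelder}; your ODE approach is a third, independent route.
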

\textbf{Proof: } Since the unit circle in the complex plane is
Hamiltonian displaceable, Rabinowitz Floer homology vanishes by
\cite{cieliebak-frauenfelder}. If the vortex number were zero,
Rabinowitz Floer homology would be equal to the homology of the
critical manifold of $\mathcal{A}^\mu$ which is a countable disjoint
union of circles. Therefore the vortex number has to be equal to
one. For an alternative argument based on finite dimensional
approximation we refer to \cite{frauenfelder}.\hfill $\square$

\section{Square roots in simple groups}

In this section we prove that in the Hamiltonian symplectomorphism
group of a closed connected symplectic manifold each element can be
written as a finite product of elements which admit a square root.
This result is a straightforward consequence of a deep result due to
Banyaga.

\begin{thm}\label{ban}
Assume $(M,\omega)$ is a closed connected symplectic manifold. Then
for each $\phi \in \mathrm{Ham}(M,\omega)$ there exists $n \in
\mathbb{N}$ and $\psi_i \in \mathrm{Ham}(M,\omega)$ for $1 \leq i
\leq n$ such that
$$\phi=\psi_1^2 \cdots \psi_n^2.$$
\end{thm}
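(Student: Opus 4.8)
The plan is to invoke Banyaga's theorem that for a closed connected symplectic manifold the group $\mathrm{Ham}(M,\omega)$ is simple, and then run an elementary group-theoretic argument on top of it. Write $S \subset \mathrm{Ham}(M,\omega)$ for the set of elements admitting a square root, $S = \{\psi^2 : \psi \in \mathrm{Ham}(M,\omega)\}$, and let $N = \langle S\rangle$ be the subgroup of $\mathrm{Ham}(M,\omega)$ generated by $S$. The theorem is the assertion that $N = \mathrm{Ham}(M,\omega)$, together with the observation that every element of $N$ is in fact a product of elements of $S$ (and not merely of $S \cup S^{-1}$).

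First I would record two closure properties of $S$, both immediate. It is invariant under conjugation: if $\phi = \psi^2$ then $g\phi g^{-1} = (g\psi g^{-1})^2$ for every $g \in \mathrm{Ham}(M,\omega)$; hence $N$ is a \emph{normal} subgroup. It is also closed under inverses: if $\phi = \psi^2$ then $\phi^{-1} = (\psi^{-1})^2$, so $S^{-1} = S$. The second property shows that $N$, being generated by $S$, consists precisely of the finite products $\psi_1^2 \cdots \psi_n^2$ (the identity being covered by $n=1$, $\psi_1 = \mathrm{id}$). Next I would check that $N$ is nontrivial: for an autonomous (time-independent) Hamiltonian $H \in C^\infty(M)$ the flow satisfies the one-parameter group law $\phi_H^s \circ \phi_H^t = \phi_H^{s+t}$, so $\phi_H^1 = (\phi_H^{1/2})^2 \in S$; choosing $H$ nonconstant and rescaling it by a sufficiently small positive constant makes $\phi_H^1 \neq \mathrm{id}$, so $S \neq \{\mathrm{id}\}$ and thus $N \neq \{\mathrm{id}\}$.

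Finally, Banyaga's simplicity theorem says that the only normal subgroups of $\mathrm{Ham}(M,\omega)$ are $\{\mathrm{id}\}$ and the whole group; since $N$ is a nontrivial normal subgroup, $N = \mathrm{Ham}(M,\omega)$. Combining this with the description of $N$ from the previous paragraph, every $\phi \in \mathrm{Ham}(M,\omega)$ is of the form $\psi_1^2 \cdots \psi_n^2$, which is the claim. The only genuine input is the deep fact — effectively a black box here — that $\mathrm{Ham}(M,\omega)$ is simple for closed connected $M$; the remainder is bookkeeping. The one subtlety worth flagging in the write-up is that time-\emph{dependent} Hamiltonian isotopies do not obviously provide square roots, since reparametrizing the time variable alters the generating Hamiltonian and $\phi_{1/2}^2 \neq \phi_1$ in general; this is exactly why the argument proceeds via autonomous flows together with simplicity rather than by a direct construction.
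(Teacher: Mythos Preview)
Your proof is correct and follows essentially the same route as the paper: define the subgroup generated by squares, verify it is normal and nontrivial, and then invoke Banyaga's simplicity theorem. The paper packages the group-theoretic part as a separate lemma/corollary (``$G$ simple and not two-torsion implies $G^2=G$'') and is terser about nontriviality, while you are more explicit in checking $S^{-1}=S$ and in exhibiting a nontrivial square via an autonomous flow, but these are cosmetic differences.
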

Before embarking on the proof of the Theorem we first consider an
arbitray group $G$. We denote by $G^2$ the subgroup of $G$ generated
by all squares in $G$, i.e.
$$G^2=\big\{g_1^2 \cdots
g_n^2: g_1,\cdots ,g_n \in G,\,\,n \in \mathbb{N}\big\}.$$
\begin{lemma}\label{g2}
$G^2$ is a normal subgroup in $G$.
\end{lemma}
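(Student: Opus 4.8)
The plan is to exploit the fact that conjugation by a fixed element is a group automorphism of $G$, so that normality of $G^2$ reduces to checking that conjugation preserves the set of generators of $G^2$, namely the set of squares. First I would record that $G^2$ as defined really is a subgroup: it contains the identity $e=e^2$, it is visibly closed under multiplication, and it is closed under inversion because $(g_1^2\cdots g_n^2)^{-1}=(g_n^{-1})^2\cdots(g_1^{-1})^2$ is again a product of squares.

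The key observation is the identity $hg^2h^{-1}=(hgh^{-1})^2$, valid for all $g,h\in G$, which says that conjugation by any $h\in G$ sends a square to a square. Given an arbitrary element $x=g_1^2\cdots g_n^2\in G^2$ and any $h\in G$, I would then compute, inserting $h^{-1}h$ between consecutive factors,
$$hxh^{-1}=(hg_1h^{-1})^2(hg_2h^{-1})^2\cdots(hg_nh^{-1})^2,$$
and conclude that $hxh^{-1}$ is again a product of squares, i.e.\ $hxh^{-1}\in G^2$. Hence $hG^2h^{-1}\subseteq G^2$ for every $h\in G$.

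Finally, applying the inclusion just established with $h^{-1}$ in place of $h$ gives $h^{-1}G^2h\subseteq G^2$, equivalently $G^2\subseteq hG^2h^{-1}$; combined with the previous inclusion this yields $hG^2h^{-1}=G^2$ for all $h\in G$, which is precisely the statement that $G^2$ is normal in $G$. There is no genuine obstacle here; the only point requiring a moment's care is the routine bookkeeping of inserting $h^{-1}h$ between the factors in the displayed identity.
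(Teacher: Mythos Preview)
Your proof is correct and follows essentially the same approach as the paper: the core step is the identity $hg_1^2\cdots g_n^2h^{-1}=(hg_1h^{-1})^2\cdots(hg_nh^{-1})^2$, which is exactly what the paper uses. You simply add a bit more bookkeeping (verifying that $G^2$ is a subgroup and deducing the reverse inclusion), which the paper omits.
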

\textbf{Proof: }Let $g \in G$ and $g_1^2 \cdots g_n^2 \in G^2$. Then
$$g g_1^2\cdots g_n^2 g^{-1}=(g g_1 g^{-1})^2 \cdots (g g_n
g^{-1})^2 \in G^2.$$ This finishes the proof of the Lemma. \hfill
$\square$
\begin{cor}\label{g3}
If $G$ is simple and not two-torsion, then $G^2=G$.
\end{cor}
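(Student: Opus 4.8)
The plan is to deduce Corollary~\ref{g3} directly from Lemma~\ref{g2}. Since $G^2$ is a normal subgroup of $G$ by Lemma~\ref{g2} and $G$ is simple, there are only two possibilities: either $G^2=\{e\}$ or $G^2=G$. I would rule out the first possibility using the hypothesis that $G$ is not two-torsion.

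First I would observe that $G^2=\{e\}$ forces $g^2=e$ for every $g\in G$, since every square lies in $G^2$. In other words, every nontrivial element of $G$ would have order exactly two, i.e.\ $G$ would be two-torsion. This contradicts the standing assumption on $G$. Hence $G^2\neq\{e\}$, and so by simplicity $G^2=G$, which is the assertion.

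I do not expect any real obstacle here; the statement is a short formal consequence of the preceding Lemma together with the definition of simplicity. The only point worth stating carefully is the equivalence "$G^2=\{e\}$ iff $G$ is two-torsion," which follows immediately from the fact that the squares generate $G^2$: if $G^2=\{e\}$ then each generator $g^2$ is trivial, and conversely if every element squares to the identity then every generator of $G^2$ is trivial. One should also note the degenerate convention that the trivial group is not simple (or, if one allows it, that it is two-torsion vacuously only in a trivial sense), but under the stated hypotheses $G$ is a nontrivial simple group, so this causes no difficulty.
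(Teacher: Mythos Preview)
Your proposal is correct and follows essentially the same argument as the paper: both use Lemma~\ref{g2} to conclude $G^2$ is normal, observe that the hypothesis ``not two-torsion'' forces $G^2\neq\{e\}$, and then invoke simplicity to get $G^2=G$. Your additional remarks on the equivalence $G^2=\{e\}\Leftrightarrow G$ two-torsion and on the trivial-group convention are just extra care, not a different route.
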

\textbf{Proof: } Since $G$ is not two-torsion $G^2$ is nontrivial.
Therefore by Lemma~\ref{g2} $G^2$ is a nontrivial normal subgroup of
$G$. Since $G$ is simple $G^2=G$. \hfill $\square$
\\ \\
\textbf{Proof of Theorem~\ref{ban}: } We can assume without loss of
generality that $M$ has positive dimension and therefore
$\mathrm{Ham}(M,\omega)$ is not two-torsion. By Banyaga's theorem,
see \cite{banyaga}, the Hamiltonian symplectomorphism group is
simple. Hence the Theorem follows from Corollary~\ref{g3}. \hfill
$\square$

\end{document}